\documentclass[11pt]{amsart}
\usepackage{amsmath}
\usepackage[english,  activeacute]{babel}
\usepackage[latin1]{inputenc}
\usepackage{amssymb}
\usepackage{amsthm}
\usepackage{graphics,graphicx}
\usepackage{array}
\usepackage{cite}
%usepackage{pdflscape}
\usepackage{a4wide}
\allowdisplaybreaks
\setcounter{tocdepth}{3}
\usepackage{color, url}
\usepackage{float}
\usepackage{caption}
\captionsetup[table]{skip=10pt}

\theoremstyle{plain}
\newtheorem{theorem}{Theorem}
\newtheorem{proposition}[theorem]{Proposition}
\newtheorem{corollary}[theorem]{Corollary}
\newtheorem{lemma}[theorem]{Lemma}
\theoremstyle{definition}

\newtheorem{conjecture}[theorem]{Conjecture}

\thispagestyle{empty}
\setlength{\parindent}{0pt}

\date{\today}

\title[On a conjecture of Lin and Kim]{On a conjecture of Lin and Kim concerning a refinement of Schr\"{o}der numbers}

\begin{document}

\author[T. Mansour]{Toufik Mansour}
\address{Department of Mathematics, University of Haifa,
3498838 Haifa, Israel}
\email{tmansour@univ.haifa.ac.il}
\author[M. Shattuck]{Mark Shattuck}
\address{Department of Mathematics, University of Tennessee,
37996 Knoxville, TN}
\email{shattuck@math.utk.edu}

\begin{abstract}
In this paper, we compute the distribution of the first letter statistic on nine avoidance classes of permutations corresponding to two pairs of patterns of length four.  In particular, we show that the distribution is the same for each class and is given by the entries of a new Schr\"{o}der number triangle.  This answers in the affirmative a recent conjecture of Lin and Kim.  We employ a variety of techniques to prove our results, including generating trees, direct bijections and the kernel method.  For the latter, we make use of in a creative way what we are trying to show in three cases to aid in solving a system of functional equations satisfied by the associated generating functions.
\end{abstract}
\subjclass[2010]{05A15, 05A05}
\keywords{pattern avoidance, combinatorial statistic, kernel method}

\maketitle

\section{Introduction}
Given two permutations $\pi=\pi_1\cdots\pi_n\in \mathcal{S}_n$ and $\tau=\tau_1\cdots\tau_k\in \mathcal{S}_k$, we say that $\pi$ \emph{contains} the pattern $\tau$ if there exist indices $i_1 < i_2 <\cdots< i_k$ such that
$\pi_{i_1}\cdots\pi_{i_k}$ is order isomorphic to $\tau$, that is, $\pi_{i_a}>\pi_{i_b}$ if and only if $\tau_a>\tau_b$. Otherwise, $\pi$ is said to \emph{avoid} the pattern $\tau$. Moreover, we say that $\pi$ avoids a set $L$ of patterns if it avoids each pattern in $L$, and let $\mathcal{S}_n(L)$ denote the subset of $\mathcal{S}_n$ whose members avoid $L$. In recent decades, the study of pattern avoidance in permutations has been the object of considerable attention (see, e.g., \cite{Kit} and references contained therein).

An \emph{inversion} within a permutation $\sigma=\sigma_1\cdots\sigma_n\in \mathcal{S}_n$ is an
ordered pair $(a,b)$ such that $1\leq a<b\leq n$ and $\sigma_a>\sigma_b$. The inversion sequence of $\sigma$ is given by $a_1\cdots a_n$, where $a_i$ records the number of entries of $\sigma$ to the right of $i$ and less than $i$ for each $i \in [n]$. The systematic study of patterns in inversion sequences
was initiated only recently in \cite{CMSW,MaSh}.

The (large) Schr\"oder number $S_n$ (see \cite[A006318]{Slo}) is defined recursively by
$$nS_n=3(2n-3)S_{n-1}-(n-3)S_{n-2}, \qquad n \geq 3,$$
with $S_1=1$ and $S_2=2$, and arises as the enumerator of several avoidance classes of permutations corresponding to a pair of patterns of length four.  In particular, combining the results from \cite{Gire,Kr,West}, one has the $S_n$ enumerates $\mathcal{S}_n(\sigma,\tau)$ for the following ten inequivalent pairs $(\sigma,\tau)$:
 \begin{align*}
 \text{I.}~(1234, 2134) &&  \text{II.}~(1324, 2314) && \text{III.}~(1342, 2341) && \text{IV.}~(3124, 3214) && \text{V.}~(3142, 3214)~\\
  \text{VI.}~(3412, 3421) && \text{VII.}~(1324, 2134) && \text{VIII.}~(3124, 2314)&& \text{IX.}~(2134, 3124) && \text{X.}~(2413, 3142).
\end{align*}
This answered in the affirmative a conjecture originally posed by Stanley (see \cite{Kr} for details).   Moreover, outside of symmetry, there are no other such pairs $(\sigma,\tau)$ for which  $|\mathcal{S}_n(\sigma,\tau)|=S_n$.  In this paper, we obtain a refinement of this result in several cases by considering distributions of certain statistics on the various classes.  For other refinements of the Schr\"oder numbers, see, e.g., \cite{BSS,PSu,RSi,Rog}.

\begin{table}[htp]
\begin{tabular}{l||llllll}
  $n\backslash k$ & 1 & 2 & 3 & 4 & 5 & 6\\\hline\hline
    1   & 1 &   &   &   &    &\\
    2   & 1 & 1 &   &   &    &\\
    3   & 2 & 2 & 2 &   &    &\\
    4   & 4 & 6 & 6 & 6 &    &\\
    5   & 8 & 16& 22& 22& 22 &\\
    6   &16 &40 &68 &90 & 90 & 90
\end{tabular}
\caption{The new Schr\"oder triangle $S_{n,k}$ for $1\leq k\leq n\leq 6$.}\label{tabsch}
\end{table}

In a recent paper, Lin and Kim \cite{LK} introduced a new triangle $S_{n,k}$ for Schr\"oder numbers in their study of inversion sequences; see Table \ref{tabsch} above. Here, we find it here more convenient for what follows to start from $k=1$ instead of $k=0$, as was done in \cite{LK}. Note that $S_{n,k}$ is given recursively by
  $$S_{n,k}=S_{n,k-1}+2S_{n-1,k}-S_{n-1,k-1}, \qquad 1 \leq k \leq n-2,$$
  with $S_{n,n}=S_{n,n-1}=S_{n,n-2}$ for $n\geq 3$ and $S_{1,1}=S_{2,1}=S_{2,2}=1$.
   Lin and Kim showed that $S_{n,k}$ enumerates the inversion sequences $\pi_1\cdots\pi_n\in\{I_n(021)\mid \pi_n=k \mod n\}$.  Then they state the following conjecture which provides a connection between inversion sequences and pattern avoidance in permutations.
\begin{conjecture}(Lin and Kim \cite{LK}.)
Let $(\nu,\mu)$ be a pair of patterns of length four. Then
$$S_{n,k} = |\{\sigma_1\cdots\sigma_n\in \mathcal{S}_n(\nu,\mu)\mid \sigma_n=k \}|$$
for all $1\leq k\leq n$ if and only if $(\nu,\mu)$ is one of the following nine pairs:
\begin{align*}
&(4321,3421),\, (3241,2341),\, (2431,2341),\, (4231,3241),\, (4231,2431),\\ &(4231,3421),\, (2431,3241),\, (3421,2431),\, (3421,3241).
\end{align*}
\end{conjecture}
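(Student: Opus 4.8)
The plan is to prove the "if" direction of the conjecture, since the "only if" direction follows from a finite check (computing the first-letter/last-letter distribution on $\mathcal{S}_n(\nu,\mu)$ for small $n$ and all $\binom{24}{2}$ pairs, then discarding those that fail). Because last-letter statistics under reverse-complement correspond to first-letter statistics, I would first pass to the first-letter statistic, replacing each pair by an equivalent one under the symmetries of the square (reverse, complement, inverse) so that what must be shown becomes: the generating function
$$F_\nu^\mu(x,q)=\sum_{n\geq1}\sum_{\sigma\in\mathcal{S}_n(\nu,\mu)}q^{\sigma_1}x^n$$
is the same for all nine pairs and equals $\sum_{n,k}S_{n,k}q^kx^n$. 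The recursion $S_{n,k}=S_{n,k-1}+2S_{n-1,k}-S_{n-1,k-1}$ together with the boundary conditions translates into a single functional equation for this bivariate generating function; call its solution $G(x,q)$. The strategy is then to show $F_\nu^\mu=G$ for each of the nine pairs.

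For the individual pairs I would partition $\mathcal{S}_n(\nu,\mu)$ according to the position and/or value of the first letter (or the first two letters) and set up a system of functional equations for refined generating functions $F_{\nu}^{\mu}(x,q\mid \text{first letter}=i)$ or for generating functions tracking an auxiliary "active site" parameter in a generating-tree formulation. Several of the nine pairs should admit a clean generating tree (a succession rule on labels counting the number of places a new maximal or near-maximal entry can be inserted), and for those the bivariate generating function can be extracted directly; this handles the cases the abstract attributes to "generating trees." For one or two pairs I expect a direct bijection between the corresponding avoidance classes (or between one such class and the inversion-sequence class $I_n(021)$ refined by $\pi_n \bmod n$) to be the cleanest route, transporting the statistic; the abstract signals that such bijections exist. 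The remaining hard cases require the kernel method: the first-letter decomposition yields a functional equation of the form $K(x,q,q_0)\,F(x,q)=\cdots$ involving a catalytic variable, and one solves by cancelling the kernel.

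The main obstacle, flagged by the authors themselves, is the three cases where the naive kernel-method bootstrap does not close: one obtains a system of functional equations in several unknown series (generating functions refined by two leading entries, say) whose kernel manipulations leave more unknowns than independent equations. The trick I would use—and which the abstract explicitly advertises—is to feed in the conjectured answer: assume provisionally that $F(x,1)=\sum_n S_nx^n$ (equivalently that the total count is the Schröder number, which is already known from \cite{Gire,Kr,West} after translating the pair through symmetry, so this is legitimate and not circular), and use that identity to eliminate one unknown from the system, after which the kernel method determines the rest and one recovers the full bivariate $F(x,q)=G(x,q)$. Verifying that the $G$ so obtained indeed satisfies the translated recursion of Table~\ref{tabsch}, and checking the boundary terms $S_{n,n}=S_{n,n-1}=S_{n,n-2}$ match, is then a routine but necessary final step. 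I would organize the paper with one subsection per pair (grouping the ones handled by a common bijection), preceded by a short lemma recording the functional equation satisfied by $G(x,q)$ and its closed form, so that each pair's argument reduces to "derive a functional equation, solve it, compare to $G$."
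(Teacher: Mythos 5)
Your outline identifies the same high-level architecture the paper uses --- reversal to convert the last-letter statistic to the first-letter statistic, a finite computation for the ``only if'' direction, and a mix of bijections, generating trees, and the kernel method (with the conjecture fed back in) for the nine ``if'' cases --- but as written it is a research plan rather than a proof: none of the nine cases is actually carried out. No recurrence is derived, no bijection is constructed, no succession rule or functional equation is written down. For instance, the paper's treatment of $(1234,1243)$, $(1324,1342)$, $(1423,1432)$ hinges on the concrete observation that an avoider starting with $i\leq n-3$ must have second letter in $[i-1]\cup\{n-1,n\}$, after which a letter can be deleted, yielding $a_{n,i}=2a_{n-1,i}+\sum_{\ell=1}^{i-1}a_{n-1,\ell}$, which is then matched against the $S_{n,k}$ recursion by a double induction; the case $(1342,1432)$ needs an explicit left-right-minima-preserving bijection onto $\mathcal{S}_{n,i}(1234,1243)$; and the generating-tree cases need the specific succession rule $(k)\rightarrow(3)(4)\cdots(k+1)(k+1)$. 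None of this content is present or even sketched concretely enough to be checked.

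There is also a substantive error in how you propose to close the three hard systems. You claim it suffices to assume $F(x,1)=\sum_n S_nx^n$, i.e.\ only the unrefined Schr\"oder count, ``so this is legitimate and not circular.'' In the paper that known univariate series enters as $A(x,1,1)$, but it does \emph{not} by itself close the system: one must additionally posit the conjectured \emph{refined} series $A(x,v,1)$ --- precisely the statement being proved, at $w=1$ --- in order to extract a candidate for $A^{+}(x,v,1)$ and then run the kernel substitutions. Non-circularity is restored not because the assumption is already known, but because the resulting closed forms are verified a posteriori to satisfy the original system of functional equations (and the needed boundary/initial data), at which point the provisional assumption is discharged. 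Your ``routine final step'' points the verification at the wrong object (the $S_{n,k}$ recursion rather than the functional system), which is exactly where a careless write-up would become circular. A minor further slip: reversal, not reverse-complement, carries $\sigma_n=k$ to $\sigma_1=k$; reverse-complement would send it to $\sigma_1=n+1-k$.
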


Given $1 \leq i \leq n$, let $\mathcal{S}_{n,i}(\sigma,\tau)$ denote the set of permutations of length $n$ avoiding $\sigma$ and $\tau$ and starting with $i$.  Note that one may consider equivalently the distribution of the first letter statistic on the set of permutations avoiding the reversal of the two patterns in question in each case. Here, we confirm the conjecture by showing that $|\mathcal{S}_{n,i}(\sigma,\tau)|=S_{n,i}$ for each of the nine pairs above (where the patterns in each pair are reversed).  It is seen that these nine cases are derived from only six of the ten symmetry classes (I)-(X) above.  It should be remarked that the reversal, complement and inverse operations do not respect the first letter statistic and thus members of the same symmetry class do not have the same first letter distribution in general.  Moreover, these are the only nine pairs such that $|\mathcal{S}_{n,i}(\sigma,\tau)|=S_{n,i}$ for all $i$; see Table \ref{Tableofvalues} at the end of the introduction which rules out all other possible pairs $(\sigma,\tau)$ for which $|\mathcal{S}_n(\sigma,\tau)|=S_n$.  Furthermore, we have also computed the joint distribution with the descents statistic in six cases which confirms an additional conjecture of Lin and Kim \cite{LK}.

In several cases, we will make use of a generating function approach to establish the result.
Note that by the kernel method \cite{HouM}, one can show
\begin{equation}\label{introe1}
\sum_{n\geq1}\left(\sum_{k=1}^nS_{n,k}y^k\right)x^n=\frac{xy(2-3x-3y+3xy)+xy(x+y-xy)(xy+\sqrt{1-6xy+x^2y^2})}{2(1-2x-y+xy)},
\end{equation}
which reduces when $y=1$ to the well-known formula Schr\"{o}der number generating function given by
\begin{equation}\label{introe2}
\sum_{n\geq1}S_nx^n=\frac{1-x-\sqrt{1-6x+x^2}}{2}.
\end{equation}

{\footnotesize\begin{table}[htp]
\begin{tabular}{|l|l||l|l|}\hline
  $\sigma,\tau$ & $\{|\{\pi\in S_8(\sigma,\tau)\mid\pi_1=k\}|\}_{k=1}^8$ & $\sigma,\tau$ & $\{|\{\pi\in S_8(\sigma,\tau)\mid\pi_1=k\}|\}_{k=1}^8$ \\\hline\hline
2413,4123&	1584,1036,996,956,879,751,533,233    &
3142,4123&	1584,1584,1252,912,637,443,323,233   \\\hline
3142,3214&	1584,1584,736,396,292,304,488,1584   &
2341,2413&	1584,488,304,292,396,736,1584,1584   \\\hline
2341,3142&	1584,811,587,489,481,577,855,1584    &
2413,3214&	1584,855,577,481,489,587,811,1584    \\\hline
2431,3421&	1806,1022,710,614,644,795,1161,1806  &
2431,4231&	1806,1092,1008,1045,1120,1134,924,429\\\hline
2314,3124&	1806,1092,752,629,629,752,1092,1806  &
2314,3214&	1806,1092,752,629,629,752,1092,1806  \\\hline
2341,3241&	1806,1092,752,629,629,752,1092,1806  &
2413,3142&	1806,1092,752,629,629,752,1092,1806  \\\hline
2431,3241&	1806,1092,752,629,629,752,1092,1806  &
2134,3124&	1806,1161,795,644,614,710,1022,1806  \\\hline
3241,3421&	1806,1806,1198,678,406,342,516,1806  &
3214,3241&	1806,1806,1220,672,390,342,516,1806  \\\hline
3124,4123&	1806,1806,1502,1152,840,594,429,429  &
3214,4213&	1806,1806,1502,1152,840,594,429,429  \\\hline
3241,4231&	1806,1806,1502,1152,840,594,429,429  &
3412,4312&	1806,1806,1502,1152,840,594,429,429  \\\hline
3421,4231&	1806,1806,1502,1152,840,594,429,429  &
3421,4321&	1806,1806,1502,1152,840,594,429,429  \\\hline
{\it4123,4132}&	1806,1806,1806,1412,928,512,224,64   &
{\it4123,4213}&	1806,1806,1806,1412,928,512,224,64   \\\hline
{\it4132,4213}&	1806,1806,1806,1412,928,512,224,64   &
{\it4132,4231}&	1806,1806,1806,1412,928,512,224,64   \\\hline
{\it4132,4312}&	1806,1806,1806,1412,928,512,224,64   &
{\it4213,4231}&	1806,1806,1806,1412,928,512,224,64   \\\hline
{\it4213,4312}&	1806,1806,1806,1412,928,512,224,64   &
{\it4231,4312}&	1806,1806,1806,1412,928,512,224,64   \\\hline
{\it4312,4321}&	1806,1806,1806,1412,928,512,224,64   &
3124,3214&	1806,1806,788,540,484,540,788,1806   \\\hline
3412,3421&	1806,1806,788,540,484,540,788,1806   &
2314,2341&	1806,516,342,390,672,1220,1806,1806  \\\hline
2134,2314&	1806,516,342,406,678,1198,1806,1806  &
2134,2143&	1806,788,540,484,540,788,1806,1806   \\\hline
2341,2431&	1806,788,540,484,540,788,1806,1806   &
1432,2413&	233,323,443,637,912,1252,1584,1584   \\\hline
1432,3142&	233,533,751,879,956,996,1036,1584    &
1234,2134&	429,429,594,840,1152,1502,1806,1806  \\\hline
1243,2143&	429,429,594,840,1152,1502,1806,1806  &
1324,2134&	429,429,594,840,1152,1502,1806,1806  \\\hline
1324,2314&	429,429,594,840,1152,1502,1806,1806  &
1342,2341&	429,429,594,840,1152,1502,1806,1806  \\\hline
1432,2431&	429,429,594,840,1152,1502,1806,1806  &
1324,3124&	429,924,1134,1120,1045,1008,1092,1806\\\hline
1423,4123&	429,924,1344,1582,1582,1344,924,429  &
1432,4132&	429,924,1344,1582,1582,1344,924,429  \\\hline
{\bf1234,1243}&	64,224,512,928,1412,1806,1806,1806   &
{\bf1243,1324}&	64,224,512,928,1412,1806,1806,1806   \\\hline
{\bf1243,1342}&	64,224,512,928,1412,1806,1806,1806   &
{\bf1243,1423}&	64,224,512,928,1412,1806,1806,1806   \\\hline
{\bf1324,1342}&	64,224,512,928,1412,1806,1806,1806   &
{\bf1324,1423}&	64,224,512,928,1412,1806,1806,1806   \\\hline
{\bf1342,1423}&	64,224,512,928,1412,1806,1806,1806   &
{\bf1342,1432}&	64,224,512,928,1412,1806,1806,1806   \\\hline
{\bf1423,1432}&	64,224,512,928,1412,1806,1806,1806   &&\\\hline
\end{tabular}
\caption{All symmetries of classes I-X according to first letter statistic.  Note the 9 boldface cases corresponding to the conjecture and the 9 italicized cases obtained by complementation.}\label{Tableofvalues}
\end{table}}

This paper is organized as follows.  In the next section, we show six cases of the conjecture above using various methods such as induction, bijections and generating trees.  In the third section, we prove the remaining three cases, each of which involves $1243$ and another pattern, by considering the joint distribution of the first and second letter statistics.  This allows one to write a system of recurrence relations in each case which may then be expressed in terms of some auxiliary generating functions leading to a system of functional equations.  At this point, one can use the conjecture itself in these particular cases along with the kernel method to ascertain a potential solution to the aforementioned system, which may then be shown to be the actual solution.  Taking the variable that marks the second letter statistic to be unity then recovers formula \eqref{introe1} and demonstrates the desired equality of distributions.

In several instances, the distribution of the first letter statistic on the pattern pair in question follows as a special case of a more general distribution.  For one's reference, listed below are the places within the paper where the specific cases are shown.
\begin{table}[htp]
\begin{tabular}{|l|l||l|l||l|l|} \hline
  Pattern Pair & Reference & Pattern Pair & Reference & Pattern Pair & Reference \\\hline\hline
  1234,1243  &  \text{Theorem}~\ref{Th1}&
  1243,1324  &  \text{Corollary}~\ref{co1243x1324}&
  1243,1342  &  \text{Corollary}~\ref{co1243x1342}\\\hline
  1243,1423   & \text{Corollary}~\ref{co1243x1423}&
  1324,1342   &  \text{Theorem}~\ref{Th1}&
  1324,1423   &  \text{Theorem}~\ref{gtreTh}\\\hline
  1342,1423   &   \text{Theorem}~\ref{gtreTh}&
  1342,1432   &   \text{Theorem}~\ref{1342,1432}&
 1423,1432   &   \text{Theorem}~\ref{Th1}\\\hline
\end{tabular}
\caption{Places where specific cases of Conjecture 1 are proven.}\label{maintable}
\end{table}

\section{Pattern avoidance and the new Schr\"{o}der triangle}

In this section, we enumerate members of $\mathcal{S}_{n,i}(\sigma,\tau)$ confirming the conjecture in six of the cases, the first three of which we treat together in the following result.

\begin{theorem}\label{Th1}
If $n \geq 1$ and $1 \leq i \leq n$, then $|\mathcal{S}_{n,i}(\sigma,\tau)|=S_{n,i}$ for $(\sigma,\tau)=(1234,1243)$, $(1324,1342)$ and $(1423,1432)$.
\end{theorem}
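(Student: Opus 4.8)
The three pattern pairs $(1234,1243)$, $(1324,1342)$ and $(1423,1432)$ share the feature that in each case both patterns begin with the same prefix and differ only in the relative order of the last two letters; equivalently, avoiding both patterns means avoiding a single "pattern-with-a-wildcard-tail." Concretely, $\mathcal{S}_n(1234,1243)$ is the set of permutations with no increasing subsequence $\sigma_{i_1}<\sigma_{i_2}<\sigma_{i_3}$ that is followed later by \emph{any} value exceeding $\sigma_{i_2}$; similarly for the other two pairs after swapping roles of small/large letters in the prefix. The plan is to set up a direct enumeration of $\mathcal{S}_{n,i}(\sigma,\tau)$ by classifying a permutation according to its first letter $i$ and the structure of what can appear afterward, and to show the resulting recurrence matches the defining recurrence $S_{n,k}=S_{n,k-1}+2S_{n-1,k}-S_{n-1,k-1}$ together with the boundary conditions $S_{n,n}=S_{n,n-1}=S_{n,n-2}$ and $S_{1,1}=S_{2,1}=S_{2,2}=1$.

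First I would handle $(\sigma,\tau)=(1234,1243)$ as the base case and reduce the other two to it. For $(1234,1243)$, fix $\sigma=\sigma_1\cdots\sigma_n$ with $\sigma_1=i$. The key observation is that avoiding $\{1234,1243\}$ is equivalent to forbidding any increasing subsequence of length three whose second term is strictly below some later entry; so the positions of the values $\{i+1,\dots,n\}$ are heavily constrained relative to the small values $\{1,\dots,i-1\}$. I would argue that once $\sigma_1=i$ is fixed, the letters larger than $i$ must form a pattern-avoiding block with very limited interaction with the smaller letters — roughly, at most one "ascent top" among the large letters can be passed, after which everything must decrease. Translating this structural description into a count, delete the first letter (or the last, whichever is cleaner) and track how $i$ changes: removing $\sigma_1=i$ from a permutation in $\mathcal{S}_{n,i}$ and relabeling produces a permutation in $\mathcal{S}_{n-1,j}$ for $j\in\{i-1,i\}$ in a controlled, multiplicity-counted way, which is exactly what the two-to-one nature of the term $2S_{n-1,k}$ minus the correction $S_{n-1,k-1}$ in the recurrence encodes; the $S_{n,k-1}$ term should come from permutations where $\sigma_1=i$ can be lowered to $i-1$ without leaving the class. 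I would make this precise via a generating-tree / succession-rule argument: assign to each $\sigma\in\mathcal{S}_{n,i}(1234,1243)$ a label recording $i$, show the children of a node with label $i$ in $\mathcal{S}_{n+1}$ are obtained by inserting a new largest (or new first) letter in the admissible slots, and verify the succession rule generates precisely $S_{n,i}$.

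For the remaining two pairs I expect a short symmetry/bijection reduction rather than a repeat of the whole argument. The pair $(1423,1432)$ is, up to reversing the two patterns, the reverse-complement (or inverse-reverse) image of $(1234,1243)$ in a way that happens to preserve the first-letter statistic — or, if no rigid symmetry works directly, I would exhibit an explicit bijection $\mathcal{S}_{n,i}(1423,1432)\to\mathcal{S}_{n,i}(1234,1243)$ built by the same "peel off the structured tail of large letters" decomposition. The pair $(1324,1342)$ is the intermediate case, obtained by a different rearrangement of the prefix $13\ast\ast$; here I would again give a structure-preserving bijection to the $(1234,1243)$ class, or alternatively run the same generating-tree computation and check the succession rule is identical. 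The main obstacle, and the step I would spend the most care on, is the structural lemma for $(1234,1243)$: proving that fixing $\sigma_1=i$ forces the large letters $\{i+1,\dots,n\}$ into exactly the configurations counted by $S_{n-1,i-1}$ and $S_{n-1,i}$ with the right multiplicities — equivalently, pinning down the succession rule so that its transfer matrix reproduces $S_{n,k}=S_{n,k-1}+2S_{n-1,k}-S_{n-1,k-1}$ and no spurious states appear near the boundary where $S_{n,n}=S_{n,n-1}=S_{n,n-2}$. Everything after that lemma — the reductions of the other two cases and the verification of the tiny base cases $n\le 2$ — should be routine.
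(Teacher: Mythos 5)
Your overall strategy --- derive a deletion-based recurrence for $a_{n,i}=|\mathcal{S}_{n,i}(\sigma,\tau)|$ and match it against the recurrence defining $S_{n,k}$ --- is the same as the paper's, but the step you yourself flag as ``the main obstacle'' is precisely the content of the proof, and the sketch you give of it does not work. Removing $\sigma_1=i$ from a member of $\mathcal{S}_{n,i}(1234,1243)$ does not produce a member of $\mathcal{S}_{n-1,j}$ with $j\in\{i-1,i\}$: the new first letter is the old \emph{second} letter (relabelled), and the correct structural fact concerns that second letter $\ell$. For $(1234,1243)$ with $i\le n-3$ one must have $\ell\in[i-1]$ or $\ell\in\{n-1,n\}$, since any $\ell$ with $i<\ell\le n-2$ leaves at least two letters greater than $\ell$ to its right, and together with $i,\ell$ these form a $1234$ or a $1243$. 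In the first case the letter $i$ is extraneous and may be deleted; in the second case $\ell$ may be deleted. This yields $a_{n,i}=2a_{n-1,i}+\sum_{\ell=1}^{i-1}a_{n-1,\ell}$ for $i\le n-2$, with $a_{n,n}=a_{n,n-1}=\sum_{\ell=1}^{n-1}a_{n-1,\ell}$. Note that this is \emph{not} the Lin--Kim recurrence $S_{n,k}=S_{n,k-1}+2S_{n-1,k}-S_{n-1,k-1}$, so a further argument (the paper uses a double induction on $n$ and $i$; alternatively one can difference consecutive values of $i$) is needed to show that $S_{n,i}$ satisfies the combinatorial recurrence. Your guesses about which permutations account for the $S_{n,k-1}$ and $2S_{n-1,k}$ terms are not substantiated and do not correspond to the decomposition that actually arises.

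The second gap is the proposed reduction of $(1423,1432)$ and $(1324,1342)$ to the first pair by a rigid symmetry: no composition of reversal, complement and inverse carries $(1234,1243)$ to $(1423,1432)$ while preserving the first letter, and the paper explicitly warns that these operations do not respect the first-letter statistic (which is why the nine conjectured pairs are not closed under them). What actually happens is that the same second-letter analysis goes through for each pair separately --- for instance, for $(1324,1342)$ the admissible second letters are $[i-1]\cup\{i+1,n\}$ rather than $[i-1]\cup\{n-1,n\}$ --- and in each case one letter is deletable, so the identical recurrence for $a_{n,i}$ results; the paper simply notes ``the others being similar.'' (An explicit structure-preserving bijection onto $\mathcal{S}_{n,i}(1234,1243)$ of the kind you mention is used in the paper, but for the different pair $(1342,1432)$ in Theorem \ref{1342,1432}, not here.) In short: the high-level plan is right, but both load-bearing steps are missing or misdirected.
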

\begin{proof}
We prove the case when $(\sigma,\tau)=(1234,1243)$, the others being similar.  Let $\mathcal{A}_{n,i}=\mathcal{S}_{n,i}(1234,1243)$ and we first write a recurrence for $a_{n,i}=|\mathcal{A}_{n,i}|$.  Note that members of $\mathcal{A}_{n,i}$ where $1 \leq i \leq n-3$ must have second letter $\ell \in [i-1]$ or $\ell=n-1,n$, for otherwise there would be an occurrence of $1234$ or $1243$ starting with $i\ell$.  Furthermore, the first letter $i$ is seen to be extraneous concerning the avoidance of $1234$ or $1243$ if $\ell \in [i-1]$ and thus may be deleted.  Similarly, $\ell$ may deleted in cases when $\ell=n-1$ or $n$.  This implies the recurrence
$$a_{n,i}=2a_{n-1,i}+\sum_{\ell=1}^{i-1}a_{n-1,\ell}, \qquad 1 \leq i \leq n-2,$$
with $a_{n,n}=a_{n,n-1}=\sum_{i=1}^{n-1}a_{n-1,i}$ for $n \geq 3$.

Before proceeding further, note that
\begin{align*}
S_{n,n-2}&=\sum_{k=1}^{n-2}(S_{n,k}-S_{n,k-1})=\sum_{k=1}^{n-2}(2S_{n-1,k}-S_{n-1,k-1})=2S_{n-1,n-2}+\sum_{k=1}^{n-3}S_{n-1,k}\\
&=\sum_{k=1}^{n-1}S_{n-1,k}.
\end{align*}
Thus, the $a_{n,i}$ and $S_{n,i}$ are both given by the sum of the entries of the previous row if $i\in[n-2,n]$,  with the two arrays also agreeing for $n=1,2$.  Therefore, to complete the proof, it suffices to show
\begin{equation}\label{Th1e1}
S_{n,i}=2S_{n-1,i}+\sum_{\ell=1}^{i-1}S_{n-1,\ell}, \qquad 1 \leq i \leq n-2.
\end{equation}
To do so, we proceed by induction on $n$ and $i$, the $i=1$ case clearly holding since $S_{n,1}=2S_{n-1,1}$ for all $n \geq 3$.  So assume $n \geq 4$ and $2 \leq i \leq n-2$.  Note further that \eqref{Th1e1} also holds when $i=n-1$ for $n \geq 3$, which follows from the work above.
Then by the induction hypothesis, we have
\begin{align}
S_{n,i}&=S_{n,i-1}+2S_{n-1,i}-S_{n-1,i-1}=2S_{n-1,i-1}+\sum_{\ell=1}^{i-2}S_{n-1,\ell}+4S_{n-2,i}+2\sum_{\ell=1}^{i-1}S_{n-2,\ell}\notag\\
&\quad-2S_{n-2,i-1}-\sum_{\ell=1}^{i-2}S_{n-2,\ell}\notag\\
&=2S_{n-1,i-1}+4S_{n-2,i}+\sum_{\ell=1}^{i-2}(S_{n-1,\ell}+S_{n-2,\ell}).\label{Th1e2}
\end{align}
Upon substituting \eqref{Th1e1} into \eqref{Th1e2}, and simplifying the resulting equality, to complete the induction, we must show
\begin{equation}\label{Th1e3}
2S_{n-1,i}=S_{n-1,i-1}+4S_{n-2,i}+\sum_{\ell=1}^{i-2}S_{n-2,\ell}.
\end{equation}
Substituting $S_{n-1,i}=2S_{n-2,i}+\sum_{\ell=1}^{i-1}S_{n-2,\ell}$ into \eqref{Th1e3} reduces it to
$$2\sum_{\ell=1}^{i-1}S_{n-2,\ell}=S_{n-1,i-1}+\sum_{\ell=1}^{i-2}S_{n-2,\ell},$$
i.e.,
$$S_{n-1,i-1}=2S_{n-2,i-1}+\sum_{\ell=1}^{i-2}S_{n-2,\ell},$$
which is true by the induction hypothesis.  This establishes \eqref{Th1e1} and completes the proof in the case of $(\sigma,\tau)=(1234,1243)$.  Since the associated $a_{n,i}$ can be shown to satisfy the same recurrence and initial conditions for the other two pattern pairs, one obtains the same result in these cases as well.
\end{proof}

To establish the case $(1342,1432)$, we define a bijection with a previous case.

\begin{theorem}\label{1342,1432}
The members of $\mathcal{S}_{n,i}(1342,1432)$ having a prescribed set of left-right minima in specified positions are in one-to-one correspondence with members of $\mathcal{S}_{n,i}(1234,1243)$ having the same set of left-right minima in the same positions.  In particular, $|\mathcal{S}_{n,i}(1342,1432)|=S_{n,i}$.
\end{theorem}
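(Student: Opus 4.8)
The plan is to reduce to a statement about permutations with a prescribed left-right minima set and then to build an explicit bijection there. Fix a set $M$ of left-right minima, consisting of values $v_1 > v_2 > \cdots > v_t = 1$ occurring at positions $1 = m_1 < m_2 < \cdots < m_t$, and define the blocks $B_j = \{m_j+1,\dots,m_{j+1}-1\}$ for $j<t$ together with $B_t = \{m_t+1,\dots,n\}$. A first, routine observation is that a permutation of $[n]$ has $M$ as its set of left-right minima exactly when every non-minimal value lying in block $B_j$ exceeds $v_j$; hence both $\mathcal{S}_{n,i}(1342,1432)$ and $\mathcal{S}_{n,i}(1234,1243)$ decompose, according to $M$, into ``staircase'' subfamilies living in a common ambient set, and it suffices to biject the two subfamilies attached to each $M$. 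Summing over $M$ (those with $v_1=i$) and invoking Theorem~\ref{Th1} then gives $|\mathcal{S}_{n,i}(1342,1432)| = |\mathcal{S}_{n,i}(1234,1243)| = S_{n,i}$.

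The second step is to rewrite the two avoidance conditions in terms of the subword of non-minimal entries. In each of the four forbidden patterns the first letter is the smallest, so any occurrence may be assumed to begin at a left-right minimum (replace its first index by the weakly earlier index realizing the running minimum), after which the remaining three indices are automatically non-minimal. This yields: a staircase permutation $\pi$ for the data $M$ avoids $\{1234,1243\}$ if and only if there is no triple of non-minimal positions $p<q<r$ with $\pi_p < \min(\pi_q,\pi_r)$; and it avoids $\{1342,1432\}$ if and only if there is no such triple with $\pi_r < \min(\pi_p,\pi_q)$ and $v_{j(p)} < \pi_r$, where $j(p)$ is the block index of $p$. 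In the first case the inequality against the running minimum is automatic and in the second it is not, which is the essential asymmetry between the two conditions. Restricted to a single block the side condition is vacuous, so within one block the two requirements are, respectively, ``the block avoids $\{123,132\}$'' and ``the block avoids $\{231,321\}$'', which are interchanged by reversing the block.

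Guided by this, I would construct the bijection by induction on $n$ (equivalently on $t$), peeling off the last left-right minimum $m_t$ together with the trailing block $B_t$: apply the inductive bijection to the staircase permutation formed by the first $m_t-1$ entries (suitably relabelled), rearrange the entries of $B_t$ in a controlled way relative to the transformed prefix, and then reinsert $m_t$ and $B_t$. Because the left-right minima positions and values are never touched, $M$ is preserved automatically; the within-block part of the pattern condition is handled by the reversal noted above; and invertibility is built in one step at a time.

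The step I expect to be the real obstacle is the cross-block interaction: a would-be occurrence in the transformed permutation can have its three positions straddling the boundary $m_t$, and one must show, using the characterizations of the second paragraph together with the inductive hypothesis, that the reinsertion creates no occurrence of $\{1234,1243\}$ and that the resulting map is a bijection. That the naive shortcut of simply reversing every block fails is easily checked: on $\pi = 2435167$, which avoids $\{1342,1432\}$, block-reversal produces $2534176$, which contains $1234$ across the block boundary (positions $1,3,4,6$). So the rearrangement of $B_t$ in the inductive step must be genuinely subtler than a plain reversal, and verifying that it simultaneously avoids the forbidden patterns, keeps the left-right minima exactly where they were, and remains reversible is where the bulk of the argument will lie.
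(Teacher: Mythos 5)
Your reduction to fixed left-right-minima data, your rewriting of both avoidance conditions in terms of the non-minimal entries (with the correct observation that the running-minimum inequality is automatic for $\{1234,1243\}$ but not for $\{1342,1432\}$), and your counterexample showing that naive block-by-block reversal fails are all sound. But the proposal stops exactly where the theorem lives: you never define the rearrangement of $B_t$ in the inductive step, and you say explicitly that constructing it and verifying it ``is where the bulk of the argument will lie.'' That construction is the entire content of the result, so as written this is a plan with a hole at its center, not a proof.

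For comparison, the paper's map is built on a different organizing principle than per-block surgery: the target invariant is that, for \emph{each} left-right minimum $a_j$, the subsequence of entries exceeding $a_j$ that lie to its right should appear in the \emph{reverse} of their order in the original $\pi$ (this is what converts every $1432$/$1342$ headed by $a_j$ into a $1234$/$1243$ and vice versa). This is achieved incrementally, processing the minima from the innermost one $a_1=1$ outward to $a_r=\pi_1$: at step $j$ one takes the entries of $[a_j+1,n]$ to the right of $a_j$, keeps the already-reversed part (those to the right of $a_{j-1}$) in place at the front of the vacated positions, and appends the reversal of the newly encountered segment (those lying between $a_j$ and $a_{j-1}$) after it. Each step touches only values above the current minimum, so positions and values of all left-right minima are preserved, and each step is invertible, giving the bijection. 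Your trailing-block induction might be completable, but without an explicit rule playing the role of this ``reverse the new segment and move it to the back'' operation --- and a verification that no cross-block occurrence of $1234$ or $1243$ is created --- the argument is not yet a proof.
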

\begin{proof}
We define a bijection $f$ between $\mathcal{S}_{n,i}(1342,1432)$ and $\mathcal{S}_{n,i}(1234,1243)$ with the desired properties as follows.  Let $\pi=\pi_1\cdots\pi_n \in \mathcal{S}_{n,i}(1342,1432)$ have left-right minima values $a_r>a_{r-1}>\cdots>a_1$, where $a_r=\pi_1$ and $a_1=1$.  Note that since the patterns in both pairs start with $1$, one may always assume a left-right minima plays the role of the $1$.  Suppose $\pi=\alpha 1 \beta$, where $\alpha$ or $\beta$ is possibly empty.  Then let $\pi_1$ be obtained by reversing the order of all letters in $\beta$ within $\pi_0=\pi$, that is, $\pi_1=\alpha 1 \beta'$, where $\beta'$ denotes the reversal of $\beta$.  Note that $\pi_1$ contains no $1234$ or $1243$ starting with the actual element $1$ since all such occurrences of $1234$ or $1243$ within $\pi_0$ have been replaced with comparable occurrences of $1432$ or $1342$, respectively.

If $r=1$ (i.e., if $\pi$ starts with $1$), then we let $f(\pi)=\pi_1$ and we are done, so assume $r \geq 2$.  In this case, let $S$ denote the subsequence comprising all elements of $[a_2+1,n]$ occurring to the right of $a_2$ in the permutation $\pi_1$.  Let $S^*$ denote the portion of $S$ to the right of $a_1$ within $\pi_1$.  We then let $\pi_2$ be the permutation obtained from $\pi_1$ as follows.  First remove all letters of $\pi_1$ corresponding to $S$ and replace them with blanks.  Within these blanks, from left to right, we then write the elements of $S^*$ followed by the reversal of $S-S^*$ to obtain $\pi_2$.  That is, the elements between $a_2$ and $a_1$ in $\pi_1$ that belong to $[a_2+1,n]$ have their relative order reversed when they are written within the blanks and now follow (instead of precede) the remaining elements of $S$.  Observe that $\pi_2$ has no occurrences of $1234$ or $1243$ starting with $a_2$ as the relative order of all elements belonging to $[a_2+1,n]$ to the right of $a_2$ in $\pi_2$ is the reverse of the order of these same elements in $\pi$.  To see this, note that only the elements in $S-S^*$ have their order reversed in going from $\pi_1$ to $\pi_2$ as the order of $S^*$ in $\pi_1$ is already the reversal of what it was in $\pi$.  Further, no occurrence of $1234$ or $1243$ starting with $a_1$ can arise during the transition from $\pi_1$ to $\pi_2$ since the positions of elements in $[2,a_2-1]$ do not change during this transition.

If $r=2$, then let $f(\pi)=\pi_2$.  Otherwise, consider moving the letters that belong to $[a_3+1,n]$ and lie to the right of $a_3$ within $\pi_2$ in a comparable manner as before, reversing the order of only those elements occurring between $a_3$ and $a_2$.  Continue on for subsequently larger $i$ where in the $r$-th step, one moves letters in $[a_r+1,n]$ in obtaining $\pi_{r}$ from $\pi_{r-1}$.  Let $f(\pi)=\pi_r$.  Note that $\pi_r$ indeed belongs of $\mathcal{S}_n(1234,1243)$ since as one may verify no occurrence of $1234$ or $1243$ starting with $a_j$ for some $j<i$ can arise during the $i$-th transition from $\pi_{i-1}$ to $\pi_i$ for any $i \in [r]$.  Since each step of the algorithm described above is seen to preserve both the positions and values of left-right minima, then so does the mapping $f$ (in particular, the first letter statistic is preserved by $f$).  This implies that the inverse  of $f$ may be found by reversing each of the $r$ steps of the algorithm starting with the last step and proceeding in reverse order.
\end{proof}

We next treat the cases $(1324,1423)$ and $(1342,1423)$ together using a generating tree approach (see, e.g., \cite{West2}).  Consider forming $\pi \in \mathcal{S}_{n}(1324,1423)$ (or $\mathcal{S}_{n}(1342,1423)$) by inserting the element $1$ within a member $\rho \in \mathcal{S}_{n-1}(1324,1423)$ ($\mathcal{S}_{n-1}(1342,1423)$, respectively), expressed using the letters in $[2,n]$.  By an \emph{active site} within $\pi=\pi_1\cdots\pi_{n-1}$, we mean a position in which $1$ may be inserted without introducing an occurrence of either $1324$ or $1423$ (and likewise for $(1342,1423)$).  Given either pair $(\sigma,\tau)$ of patterns under consideration, let $u_n(i,j)$ denote the number of members of $\mathcal{S}_{n,i}(\sigma,\tau)$ having exactly $j$ active sites.  Note that $u_n(i,j)$ for $n \geq 2$ can assume non-zero values only when $1 \leq i \leq n$ and $3 \leq j \leq n+1$.

The $u_n(i,j)$ are given recursively as follows.

\begin{lemma}\label{gtreL1}
If $3 \leq j \leq n$, then
\begin{equation}\label{gtreL1e1}
u_n(i,j)=u_{n-1}(i-1,j-1)+\sum_{\ell=j-1}^nu_{n-1}(i-1,\ell), \qquad 2 \leq i \leq n,
\end{equation}
with $u_n(1,j)=0$.  If $j=n+1$, then we have
\begin{equation}\label{gtreL1e2}
u_n(i,n+1)=u_{n-1}(i-1,n), \qquad i \geq 2,
\end{equation}
with $u_n(1,n+1)=2^{n-2}$ for $n \geq 2$ and $u_1(1,2)=1$.
\end{lemma}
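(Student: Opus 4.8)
The plan is to build a generating tree: each $\pi\in\mathcal{S}_n(\sigma,\tau)$ is obtained from a unique $\rho\in\mathcal{S}_{n-1}(\sigma,\tau)$, written on the letters $[2,n]$, by inserting the new smallest letter $1$, and we track how the set of active sites changes. First I would record the elementary fact that, because both patterns in each pair begin with their smallest letter, the inserted $1$ can only ever play that role; hence inserting $1$ just after $\rho_p$ is legal exactly when the suffix $\rho_{p+1}\cdots\rho_{n-1}$ avoids the three-letter set $P$ obtained by deleting the leading $1$ from $\sigma$ and $\tau$ — so $P=\{213,312\}$ for $(1324,1423)$ and $P=\{231,312\}$ for $(1342,1423)$. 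Since any subsequence of a $P$-avoiding suffix is again $P$-avoiding, the active sites of a permutation always form a terminal block of positions, a permutation of length $m$ attains the maximal $m+1$ active sites iff it avoids $P$, and any permutation of length at least $2$ has at least $3$ active sites. Combining the first of these with the classical count $|\mathcal{S}_m(P)|=2^{m-1}$ (valid for both of our sets $P$) yields $u_n(1,n+1)=2^{n-2}$ for $n\ge 2$ and $u_n(1,j)=0$ for $j\le n$, while $u_1(1,2)=1$ is immediate.

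Next I would establish the succession rule. Suppose $\rho$ has first letter $i'$ and exactly $\ell$ active sites, occupying positions $q,q+1,\dots,n-1$ with $q=n-\ell$ (so $q=0$ iff $\ell=n$ iff $\rho$ avoids $P$). The claim is that inserting $1$ into these $\ell$ sites produces permutations whose active-site counts form the multiset $\{\ell+1,\ell+1,\ell,\ell-1,\dots,3\}$; that inserting into the leftmost site yields a permutation beginning with $1$ (necessarily with the maximal count $n+1$) precisely when $q=0$; and that every other insertion yields a permutation with first letter $i'+1$. Granting this, the two displayed recurrences follow by a direct count. A permutation counted by $u_n(i,j)$ with $2\le i\le n$ and $3\le j\le n$ comes from a $\rho$ with first letter $i-1$; such a $\rho$ with $j-1$ active sites accounts for two of these (the two entries $\ell+1$ of the multiset), a $\rho$ with $\ell$ active sites for $j\le\ell\le n$ accounts for exactly one, and nothing else contributes, so $u_n(i,j)=2u_{n-1}(i-1,j-1)+\sum_{\ell=j}^{n}u_{n-1}(i-1,\ell)$, which is \eqref{gtreL1e1}; for $j=n+1$ only a $\rho$ with first letter $i-1$ and $n$ active sites contributes, through the unique non-front insertion of maximal label, which gives \eqref{gtreL1e2}.

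The hard part will be the succession rule itself, i.e.\ deciding which sites of $\pi=\rho_1\cdots\rho_p\,1\,\rho_{p+1}\cdots\rho_{n-1}$ survive the insertion. Every site of $\pi$ at position $\ge p$ is active, since its suffix lies inside $\{1\}\cup\rho_{p+1}\cdots\rho_{n-1}$, which avoids $P$; thus $\pi$ has at least $n-p+1$ active sites, and the whole question is how far leftward activity persists — equivalently, the length of the longest $P$-avoiding suffix of $\pi$. For $(1324,1423)$ this is clean: the minimum can only occupy the middle position of a $213$ or a $312$, so as soon as some letter of $\rho$ lies on each side of the inserted $1$ a forbidden pattern is produced; hence an interior insertion ($q\le p\le n-2$) destroys every earlier site and leaves exactly $n-p+1$ active sites, while the leftmost and rightmost insertions each leave $\ell+1$, giving the asserted multiset. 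For $(1342,1423)$ the pattern $231$ allows the minimum to occupy the \emph{last} position of an occurrence, so an interior insertion need not destroy the earlier sites; here one must show that the longest $P$-avoiding suffix of $\pi$ reaches exactly as far left as the nearest strictly decreasing run of letters of $\rho$ all smaller than everything to the right of the inserted $1$, and then check — by a case analysis of how such runs sit relative to the active sites of $\rho$, using that $\rho$ avoids $\sigma$ and $\tau$ — that, although the correspondence between insertion sites and resulting counts gets permuted, the resulting multiset of counts and the effect on the first letter are exactly those of the $(1324,1423)$ case. (Alternatively, one could construct a bijection $\mathcal{S}_{n,i}(1324,1423)\to\mathcal{S}_{n,i}(1342,1423)$ preserving the number of active sites and simply transport the first analysis to the second.) This last bookkeeping for $(1342,1423)$ is the one genuinely delicate part; everything else is routine.
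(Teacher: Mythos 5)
Your framework is exactly the paper's: build the generating tree by inserting the new minimum, observe that since both patterns begin with $1$ the active sites are governed by the suffix avoiding the reduced pair $P$ (so they form a terminal block), and extract \eqref{gtreL1e1}--\eqref{gtreL1e2} from the Schr\"oder succession rule $(k)\rightarrow(3)(4)\cdots(k+1)(k+1)$ together with the fact that the front insertion, when available, is the one producing a permutation with first letter $1$ and the maximal count $n+1$. Your treatment of the boundary values and of the pair $(1324,1423)$ is complete and correct, and your translation of the succession rule into the two recurrences (noting that $u_{n-1}(i-1,j-1)+\sum_{\ell=j-1}^n u_{n-1}(i-1,\ell)=2u_{n-1}(i-1,j-1)+\sum_{\ell=j}^n u_{n-1}(i-1,\ell)$) is fine.

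The gap is the one you yourself flag: for $(1342,1423)$ you never actually establish the succession rule, i.e.\ that the multiset of offspring site-counts is $\{3,4,\dots,k+1,k+1\}$ and that exactly one non-front insertion attains $n+1$ when $k=n$. This is not a formality, because the clean correspondence from the first pair genuinely breaks. For example, take $\rho=324$ (so $\rho$ avoids $P=\{231,312\}$ and has $k=4$ active sites): inserting $1$ at the end gives $3241$, which contains the $231$-pattern $3\,4\,1$ and has only $3$ active sites, while the interior insertion giving $3214$ has the maximal $5$ sites. So the end insertion does not produce $(k+1)$, the extra copy of $(k+1)$ migrates to an interior site, and one must prove that the counts nevertheless reshuffle into exactly the multiset $\{3,\dots,k+1,k+1\}$ with the front insertion still landing on $n+1$. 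Your description of where the longest $P$-avoiding suffix of $\pi$ reaches (the maximal decreasing run ending at the insertion point all of whose entries are smaller than everything to its right) is the correct characterization, but turning it into the statement that $p\mapsto$ (resulting count) is a bijection onto $\{3,\dots,k+1\}$ with $k+1$ doubled is precisely the content of the lemma for this pair, and "check by a case analysis" is a plan rather than a proof. The paper closes this same step by invoking the argument of Kremer's Proposition~11, where the succession rule for these Schr\"oder classes is established; you should either reproduce that argument, carry out the case analysis you outline, or cite it explicitly. (The alternative you mention, a first-letter- and site-count-preserving bijection between the two classes, is not constructed either and is not obviously easier, since "active site" is defined relative to the pattern pair.)
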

\begin{proof}
We treat the case $(1324,1423)$, with the same recurrences seen to hold for $(1342,1423)$ by a comparable analysis.  Let $\mathcal{U}_{n,i,j}$ denote the subset of $\mathcal{S}_{n,i}(1324,1423)$ enumerated by $u_n(i,j)$.  Note that the (active) sites of $\pi \in \mathcal{U}_{n,i,j}$ correspond to the rightmost $j$ possible positions of $\pi$ in which to insert a $1$ if $j \leq n$.  For if the $j$-th letter $x$ from the right within $\pi$ where $j \leq n$ starts either a $213$ or $312$ and is the rightmost such letter to do so, then all positions to the right of $x$ are sites.  From this observation, we may conclude that $\mathcal{U}_{n,1,j}$ is empty if $3 \leq j \leq n$, upon considering separately the cases $j=n$ or $j<n$.  On the other hand, since $1$ starts both of the patterns that are being avoided, we have that $\mathcal{U}_{n,1,n+1}$ is synonymous with $\mathcal{S}_{n-1}(213,312)$, which has cardinality $2^{n-2}$ if $n \geq 2$.  This establishes the initial conditions when $i=1$, so assume henceforth that $i \geq 2$.

Let $\pi$ be formed from a precursor $\alpha \in \mathcal{U}_{n-1,a,b}$ for some $a$ and $b$, expressed using $[2,n]$, by inserting $1$ as described.  Let $(k)$ denote a precursor having $k$ sites.  Then we have the succession rule $(k)\rightarrow(3)(4)\cdots(k+1)(k+1)$ with root $(2)$, which follows from the argument used in the proof of \cite[Prop.~11]{Kr} or can be reasoned directly in this case.  Note that no offspring of $\alpha$ if $b<n$ can arise by inserting $1$ in the first position, for otherwise a $1324$ or $1423$ would be introduced, whereas if $b=n$, an offspring so produced would have $n+1$ sites.  Since any offspring $\pi \in \mathcal{U}_{n,i,j}$ with $i \geq 2$ must have precursor starting with $i-1$ and at least $j-1$ sites, recurrence \eqref{gtreL1e1} follows.  On the other hand, an offspring $\pi \in \mathcal{U}_{n,i,n+1}$ where $i \geq 2$ can only come about by inserting $1$ in the final position within its precursor $\rho \in \mathcal{U}_{n-1,i-1,n}$, for the other offspring of $\rho$ having $n+1$ sites comes about by inserting $1$ in the first position.  This implies \eqref{gtreL1e2} and completes the proof.
\end{proof}

Given $n \geq 2$, let $v_n(i;q)=\sum_{j=3}^{n+1}u_n(i,j)q^{j-2}$ for $1 \leq i \leq n$, with $v_1(1;q)=1$.  Define the joint distribution polynomial $v_n(y,q)=\sum_{i=1}^nv_n(i;q)y^i$ for $n \geq 2$, with $v_1(y,q)=y$.

Then the $v_n(y,q)$ are given recursively as follows.

\begin{lemma}\label{gtreL2}
If $n \geq 3$, then
\begin{equation}\label{gtreL2e1}
v_n(y,q)=\frac{(1-y)(2^{n-1}y-y^n)q^{n-1}}{2-y}+\frac{yq(v_{n-1}(y,1)+(1-2q)v_{n-1}(y,q))}{1-q}, \qquad n \geq 2,
\end{equation}
with $v_1(y,q)=y$.
\end{lemma}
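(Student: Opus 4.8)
The plan is to turn the numerical recurrences of Lemma~\ref{gtreL1} into a recurrence for the single-index polynomials $v_n(i;q)=\sum_{j=3}^{n+1}u_n(i,j)q^{j-2}$, and then pass to the bivariate polynomial $v_n(y,q)$ by multiplying by $y^i$ and summing over $i$. Fix one of the two pattern pairs; everything below uses only Lemma~\ref{gtreL1}. For $i\ge 2$ write $v_n(i;q)=\sum_{j=3}^{n}u_n(i,j)q^{j-2}+u_n(i,n+1)q^{n-1}$. Substituting \eqref{gtreL1e1} into the first sum and \eqref{gtreL1e2} into the top term yields three contributions: the ``diagonal'' sum $\sum_{j=3}^{n}u_{n-1}(i-1,j-1)q^{j-2}$, which after reindexing $j\mapsto j+1$ equals $q\,v_{n-1}(i-1;q)$ up to a boundary term in $u_{n-1}(i-1,n)$; the top term $u_{n-1}(i-1,n)q^{n-1}$, which is itself such a boundary term; and the double sum $\sum_{j=3}^{n}\sum_{\ell=j-1}^{n}u_{n-1}(i-1,\ell)q^{j-2}$, which after swapping the order of summation becomes $\sum_{\ell}u_{n-1}(i-1,\ell)$ times a finite geometric series in $q$. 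This last step is the one that introduces the factor $(1-q)^{-1}$ together with the evaluation $v_{n-1}(i-1;1)$ (and one further boundary term).

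Collecting these, all boundary terms cancel save a single copy of $-q^{n-1}u_{n-1}(i-1,n)$, so that for $i\ge 2$
\begin{equation*}
v_n(i;q)=\frac{q(1-2q)}{1-q}\,v_{n-1}(i-1;q)+\frac{q}{1-q}\,v_{n-1}(i-1;1)-q^{n-1}u_{n-1}(i-1,n),
\end{equation*}
while $v_n(1;q)=2^{n-2}q^{n-1}$ for $n\ge 2$ straight from the $i=1$ initial conditions of Lemma~\ref{gtreL1}. Multiplying by $y^i$ and summing over $1\le i\le n$, the index shift $i\mapsto i-1$ gives $\sum_{i\ge 2}v_{n-1}(i-1;q)y^i=y\,v_{n-1}(y,q)$ and $\sum_{i\ge 2}v_{n-1}(i-1;1)y^i=y\,v_{n-1}(y,1)$, and hence
\begin{equation*}
v_n(y,q)=q^{n-1}y\bigl(2^{n-2}-P_{n-1}(y)\bigr)+\frac{yq}{1-q}\bigl((1-2q)v_{n-1}(y,q)+v_{n-1}(y,1)\bigr),
\end{equation*}
where $P_m(y):=\sum_{i\ge 1}u_m(i,m+1)y^i$ records, by first letter, the permutations of length $m$ having the maximal number $m+1$ of active sites.

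It remains to determine $P_m(y)$. By \eqref{gtreL1e2} and $u_m(1,m+1)=2^{m-2}$, one has $P_m(y)=2^{m-2}y+y\,P_{m-1}(y)$ for $m\ge 2$ with $P_1(y)=y$, which unwinds to $P_m(y)=y^m+\sum_{k=1}^{m-1}2^{m-1-k}y^k$. Summing the geometric part gives
\begin{equation*}
2^{n-2}-P_{n-1}(y)=\frac{(1-y)(2^{n-1}-y^{n-1})}{2-y},
\end{equation*}
so $q^{n-1}y\bigl(2^{n-2}-P_{n-1}(y)\bigr)=\dfrac{(1-y)(2^{n-1}y-y^n)q^{n-1}}{2-y}$. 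Substituting this into the previous display produces exactly \eqref{gtreL2e1}, the case $n=1$ being the stated normalization $v_1(y,q)=y$.

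Beyond the essentially mechanical algebra, the one point that must be handled carefully is the bookkeeping of boundary terms: the $j=n$ summands in both sums of \eqref{gtreL1e1} and the ``$n+1$ active sites'' term of \eqref{gtreL1e2} have to conspire so that exactly one copy of $-q^{n-1}u_{n-1}(i-1,n)$ survives, since that surviving term --- after summation against $y^i$ and combination with the $i=1$ contribution $2^{n-2}q^{n-1}y$ --- is precisely what builds the first summand of \eqref{gtreL2e1}. The finite geometric summation of the double sum (the source of the $(1-q)^{-1}$ and of $v_{n-1}(y,1)$) and the short closed-form evaluation of $P_m(y)$ are routine.
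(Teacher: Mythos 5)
Your proof is correct and follows essentially the same route as the paper's: substitute the recurrences of Lemma~\ref{gtreL1} into $v_n(i;q)$, swap the order of summation to produce the geometric series responsible for the $\frac{1}{1-q}$ factor and the $v_{n-1}(y,1)$ term, track the boundary terms so that a single $-q^{n-1}u_{n-1}(i-1,n)$ survives, and then sum against $y^i$. The only cosmetic difference is that you obtain the values $u_m(j,m+1)$ by unwinding \eqref{gtreL1e2} (via your $P_m(y)$) rather than quoting $|\mathcal{S}_{m,j}(213,312)|=2^{m-j-1}$ directly, which yields the same closed form for the inhomogeneous term.
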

\begin{proof}
Note that \eqref{gtreL2e1} is seen to hold if $n=2$ since $v_2(y,q)=yq(1+y)$, so assume $n \geq 3$.  Multiplying both sides of \eqref{gtreL1e1} by $q^{j-2}$, summing over $3 \leq j \leq n$ and adding $q^{n-1}$ times \eqref{gtreL1e2} gives
\begin{align*}
v_n(i;q)&=u_{n-1}(i-1,n)q^{n-1}+\sum_{j=3}^nu_{n-1}(i-1,j-1)q^{j-2}+\sum_{j=3}^nq^{j-2}\sum_{\ell=j-1}^nu_{n-1}(i-1,\ell)\\
&=q\sum_{j=3}^nu_{n-1}(i-1,j)q^{j-2}+\sum_{\ell=2}^nu_{n-1}(i-1,\ell)\sum_{j=3}^{\ell+1}q^{j-2}-u_{n-1}(i-1,n)q^{n-1}\\
&=qv_{n-1}(i-1;q)+\sum_{\ell=2}^nu_{n-1}(i-1,\ell)\frac{q-q^\ell}{1-q}-u_{n-1}(i-1,n)q^{n-1}\\
&=qv_{n-1}(i-1;q)+\frac{q}{1-q}(v_{n-1}(i-1;1)-qv_{n-1}(i-1;q))-u_{n-1}(i-1,n)q^{n-1},
\end{align*}
for $2 \leq i \leq n$, with $v_n(1;q)=2^{n-2}q^{n-1}$.  Multiplying the last recurrence by $y^i$, and summing over $i$, yields
\begin{align*}
v_n(y,q)&=yqv_{n-1}(y,q)+\frac{yq(v_{n-1}(y,1)-qv_{n-1}(y,q))}{1-q}+2^{n-2}yq^{n-1}-q^{n-1}\sum_{i=2}^nu_{n-1}(i-1,n)y^i\\
&=\frac{yq(v_{n-1}(y,1)+(1-2q)v_{n-1}(y,q))}{1-q}+2^{n-2}yq^{n-1}-q^{n-1}\left(y^n+\sum_{i=2}^{n-1}2^{n-i-1}y^i\right),
\end{align*}
where we have used the fact
\begin{align*}
u_m(j,m+1)&=|\mathcal{S}_{m,j}(213,312)|=\begin{cases}
	2^{m-j-1}, &\text{if } 1 \leq j <m;\\[3 pt]
	1, &\text{if } j=m,
	\end{cases}
\end{align*}
which follows from elementary considerations.  Simplifying and combining the inhomogeneous terms in the last recurrence formula now gives \eqref{gtreL2e1}.
\end{proof}

Note that \eqref{gtreL2e1} holds for both $(1324,1423)$ and $(1342,1423)$ since Lemma \ref{gtreL1} applies to either pair.   Let $f(x,y;q)=\sum_{n\geq1}v_n(y,q)x^n$.

\begin{theorem}\label{gtreTh}
The generating function for the joint distribution of the first letter and number of active sites statistics on $\mathcal{S}_n(1324,1423)$ and $\mathcal{S}_n(1342,1423)$ is given by
\begin{equation}\label{gtreThe1}
f(x,y;q)=\frac{xy(1-q)(1-xq)(1-2xyq)}{(1-2xq)(1-xyq)(1-(xy+1)q+2xyq^2)}+\frac{xyq}{1-(xy+1)q+2xyq^2}A(x,y),
\end{equation}
where
$$A(x,y)=\frac{xy(2-3x-3y+3xy)+xy(x+y-xy)(xy+\sqrt{1-6xy+x^2y^2})}{2(1-2x-y+xy)}.$$
\end{theorem}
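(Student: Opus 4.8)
The plan is to turn the recurrence \eqref{gtreL2e1} into a single functional equation for $f(x,y;q)$ and then read off $f(x,y;1)$ by the kernel method. Multiplying \eqref{gtreL2e1} by $x^{n}$ and summing over $n\ge 2$ (the $n=1$ term contributing $v_{1}(y,q)x=xy$), the $v_{n-1}(y,1)$ and $v_{n-1}(y,q)$ terms become $xf(x,y;1)$ and $xf(x,y;q)$ respectively, while the inhomogeneous part sums, as a pair of geometric series, to
\[
g(x,y;q):=\frac{(1-y)x^{2}yq}{2-y}\Bigl(\frac{2}{1-2xq}-\frac{y}{1-xyq}\Bigr).
\]
Gathering the $f(x,y;q)$ terms and multiplying through by $1-q$ gives
\[
K(q)\,f(x,y;q)=(1-q)\bigl(xy+g(x,y;q)\bigr)+xyq\,f(x,y;1),\qquad K(q):=1-(xy+1)q+2xyq^{2},
\]
and $K(q)$ is exactly the denominator occurring in \eqref{gtreThe1}.

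Next I would simplify $g$. The bracket in its definition has numerator $2(1-xyq)-y(1-2xq)=2-y$, which cancels the prefactor $2-y$; hence $g(x,y;q)=\dfrac{(1-y)x^{2}yq}{(1-2xq)(1-xyq)}$, and a short computation over a common denominator gives
\[
xy+g(x,y;q)=\frac{xy(1-xq)(1-2xyq)}{(1-2xq)(1-xyq)}.
\]
Dividing the functional equation by $K(q)$ now already reproduces the first summand of \eqref{gtreThe1}, so the whole matter reduces to proving $f(x,y;1)=A(x,y)$.

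For this I would invoke the kernel method. Since $v_{n}(y,q)$ is a polynomial in $q$ (of degree $n-1$), the coefficient of each power of $x$ in $f(x,y;q)$ is a polynomial in $q$, so one may substitute for $q$ the formal power series $q_{0}=q_{0}(x,y):=\dfrac{1+xy-\sqrt{1-6xy+x^{2}y^{2}}}{4xy}$, which is the root of $K$ that — unlike the other — is a genuine power series in $x$. Then $K(q_{0})=0$ and $K(q_{0})f(x,y;q_{0})=0$, so the right-hand side of the functional equation vanishes at $q=q_{0}$; using the simplified form of $xy+g$ above, the factor $xy$ cancels cleanly and one obtains
\[
f(x,y;1)=-\frac{(1-q_{0})(1-xq_{0})(1-2xyq_{0})}{q_{0}(1-2xq_{0})(1-xyq_{0})}.
\]

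It remains to identify this with $A(x,y)$. From $2xyq_{0}^{2}-(xy+1)q_{0}+1=0$ one gets the handy relation $1-2xyq_{0}=(1-xyq_{0})/q_{0}$, so that $f(x,y;1)=-\dfrac{(1-q_{0})(1-xq_{0})}{q_{0}^{2}(1-2xq_{0})}$; clearing $q_{0}$ by repeated use of the same quadratic then reduces the claim $f(x,y;1)=A(x,y)$ to a polynomial identity in $q_{0}$ modulo $2xyq_{0}^{2}-(xy+1)q_{0}+1$, which one verifies directly. Substituting $f(x,y;1)=A(x,y)$ back into the functional equation yields \eqref{gtreThe1}; setting $q=1$ there recovers $f(x,y;1)=A(x,y)=\sum_{n\ge 1}\bigl(\sum_{k=1}^{n}S_{n,k}y^{k}\bigr)x^{n}$ by \eqref{introe1}, i.e.\ $|\mathcal{S}_{n,i}(1324,1423)|=|\mathcal{S}_{n,i}(1342,1423)|=S_{n,i}$, which is Conjecture~1 for these two pairs. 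I expect the main obstacle to be precisely this last identification — checking that the radical expression delivered by the kernel method collapses to the stated $A(x,y)$; it is routine but somewhat delicate, and the earlier cancellation yielding the clean form of $xy+g$ is what keeps it tractable.
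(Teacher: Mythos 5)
Your proposal is correct and follows essentially the same route as the paper: translate the recurrence for $v_n(y,q)$ into the kernel equation $K(q)f(x,y;q)=(1-q)(xy+g)+xyq\,f(x,y;1)$ with $K(q)=1-(xy+1)q+2xyq^2$, substitute the power-series root $q_0=\frac{1+xy-\sqrt{1-6xy+x^2y^2}}{4xy}$ to extract $f(x,y;1)$, and verify via the quadratic relation $2xyq_0^2=(xy+1)q_0-1$ that the result equals $A(x,y)$. Your intermediate simplifications (the cancellation of $2-y$ and the identity $1-2xyq_0=(1-xyq_0)/q_0$) are slightly tidier than the paper's but amount to the same computation.
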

\begin{proof}
Multiplying both sides of \eqref{gtreL2e1} by $x^n$, and summing over $n \geq 2$, yields
\begin{align*}
f(x,y;q)-xy&=\frac{y(1-y)}{2-y}\cdot\frac{2x^2q}{1-2xq}-\frac{1-y}{2-y}\cdot\frac{x^2y^2q}{1-xyq}+\frac{xyq}{1-q}f(x,y;1)\\
&\quad+\frac{xyq(1-2q)}{1-q}f(x,y;q),
\end{align*}
which may be rewritten as
\begin{align}\label{gtreThe2}
\left(1-\frac{xyq(1-2q)}{1-q}\right)f(x,y;q)=xy+\frac{x^2yq(1-y)}{(1-2xq)(1-xyq)}+\frac{xyq}{1-q}f(x,y;1).
\end{align}
To solve \eqref{gtreThe2}, we apply the kernel method \cite{HouM} and let
$$q=q_0=\frac{1+xy-\sqrt{1-6xy+x^2y^2}}{4xy}.$$
This gives
\begin{align*}
f(x,y;1)&=\frac{q_0-1}{q_0}+\frac{x(1-y)(q_0-1)}{1-2xq_0-xyq_0+2x^2yq_0^2}=\frac{q_0-1}{q_0}+\frac{x(1-y)(q_0-1)}{1-x+x(xy-y-1)q_0},
\end{align*}
where we have used the fact $2xyq_0^2=(1+xy)q_0-1$.  Substituting $\frac{1}{q_0}=\frac{1+xy+\sqrt{1-6xy+x^2y^2}}{2}$ into
$$f(x,y;1)=\left(1-\frac{1}{q_0}\right)\left(1+\frac{x(1-y)}{\frac{1-x}{q_0}+x(xy-y-1)}\right),$$
and simplifying, leads to the formula $f(x,y;1)=A(x,y)$.  Substituting this back into \eqref{gtreThe2} yields \eqref{gtreThe1}.
\end{proof}

\section{The remaining cases}

In this section, we consider the remaining cases $(1243,1423)$, $(1243,1342)$ and $(1423,1324)$.  We adopt a common approach for these three cases and consider the joint distribution of the first and second letter statistics.  For the given pattern pair $(\sigma,\tau)$ under current consideration, let $a_n(i,j)$ denote the number of members of $\mathcal{S}_{n,i}(\sigma,\tau)$ whose second letter is $j$.  Let $a_n(i)=\sum_{j\neq i} a_n(i,j)$ and $a_n=\sum_{i=1}^na_{n,i}$.  Note that $a_n=S_n$, the $n$-th Schr\"{o}der number (see, e.g., \cite{Kr}).  Clearly, we have
\begin{equation}\label{12431423e0}
a_n(i,j)=a_{n-1}(j), \quad 1 \leq j <i,
\end{equation}
for all pairs of patterns under consideration.  If $i<j$ where $i \leq n-3$, then in order to write a recurrence for $a_n(i,j)$ in this case, we consider the position of the second ascent and further subcases based on the size of the difference $j-i$.  If $n \leq 3$, then all three pattern pairs have initial values given by $a_1(1)=a_2(1,2)=a_2(2,1)=a_3(i,j)=1$ where $i,j \in [3]$ with $i \neq j$.

\subsection{The case $(1243,1423)$.}

In this subsection, we enumerate the members of $\mathcal{S}_n(1243,1423)$ according to the joint distribution of the first and second letter statistics.

We first prove the following recurrence for $a_n(i,j)$ when $i<j$.

\begin{lemma}\label{12431423L1}
We have
\begin{equation}\label{12431423e1}
a_n(i,i+1)=a_{n-1}(i,i+1)+\sum_{a=1}^{i-1}\sum_{c=0}^{i-a-1}\sum_{b=a+1}^{i-c}\binom{i-a-1}{c}a_{n-c-2}(a,b), \qquad 1 \leq i \leq n-2,
\end{equation}
with $a_n(n-1,n)=a_{n-2}$,
\begin{align}
a_n(i,i+2)&=a_{n-1}(i,i+1)+a_{n-1}(i,i+2)\notag\\
&\quad+\sum_{a=1}^{i-1}\sum_{c=0}^{i-a-1}\sum_{b=a+1}^{i-c+1}\binom{i-a-1}{c}a_{n-c-2}(a,b), \qquad 1 \leq i \leq n-3,\label{12431423e2}
\end{align}
with $a_n(n-2,n)=a_{n-2}$, and
\begin{align}
a_n(i,j)&=a_{n-1}(i,j-1)+\sum_{a=1}^{i-1}\sum_{c=0}^{i-a-1}\binom{i-a-1}{c}a_{n-c-2}(a,j-c-2)\notag\\
&\quad+(1-\delta_{j,n})\cdot\left(a_{n-1}(i,j)+\sum_{a=1}^{i-1}\sum_{c=0}^{i-a-1}\binom{i-a-1}{c}a_{n-c-2}(a,j-c-1)\right) \label{12431423e3}
\end{align}
for $4 \leq i+3 \leq j \leq n$.
\end{lemma}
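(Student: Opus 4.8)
The plan is to prove the three displayed recurrences by one common structural analysis of a permutation $\pi=\pi_1\cdots\pi_n\in\mathcal{S}_{n,i}(1243,1423)$ with $\pi_1=i<j=\pi_2$, isolating a short prefix together with a set of ``inessential'' small entries whose removal yields a shorter avoider, and then checking the decomposition is reversible.

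First I would record the constraints forced by $\pi_1\pi_2=ij$. All entries exceeding $j$ lie to the right of position $2$; using positions $1,2$ to play the roles $1,2$ of an occurrence of $1243$ shows there is no inversion among them, so the entries exceeding $j$ occur in increasing order in $\pi$. Likewise, using positions $1,2$ to play the roles $1,4$ of an occurrence of $1423$ shows there is no ascent among the entries of the interval $(i,j)$, so these occur in decreasing order in $\pi$. Next I would locate the second ascent, i.e.\ the index $k$ with $\pi_2>\pi_3>\cdots>\pi_k<\pi_{k+1}$ (interpreting $k=n$ to mean there is no second ascent), and examine the decreasing run $\pi_3\cdots\pi_k$ and the entry $\pi_{k+1}$. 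Each $\pi_m$ with $3\le m\le k$ lies in $[1,i-1]\cup(i,j)$; the subrun of entries from $(i,j)$ is automatically decreasing, and one checks from the two forbidden patterns that the entries of $[1,i-1]$ occurring in this block are ``inessential'' — deleting any one of them from $\pi$ produces another $(1243,1423)$-avoider and leaves $\pi_1,\pi_2$ unchanged — while the tail $\pi_{k+1}\cdots\pi_n$ is forced to begin in a prescribed way governed by which entries of $(i,j)$ have already been used up. Deleting $\pi_1=i$, $\pi_2=j$ and those $c$ inessential small entries, then relabelling, produces a permutation counted by $a_{n-c-2}(a,b)$ where $a$ is its first letter; the binomial coefficient $\binom{i-a-1}{c}$ is precisely the number of ways to have chosen which $c$ of the $i-a-1$ entries of the interval $(a,i)$ are the inessential ones sitting in the peeled block.

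Specialising this decomposition gives the three recurrences. When $j=i+1$ there is no entry of $(i,j)$, the peeled block is $i,i+1$ followed by a decreasing run of small entries, the survivor's second letter $b$ ranges over $[a+1,i-c]$, and one obtains \eqref{12431423e1}; when $j=i+2$ there is exactly one entry $i+1$ of $(i,j)$ and its placement relative to the small run both contributes the two leading terms $a_{n-1}(i,i+1)+a_{n-1}(i,i+2)$ and raises the upper limit of $b$ to $i-c+1$, giving \eqref{12431423e2}; when $j\ge i+3$ the $(i,j)$-run is long enough that the survivor's second letter is a shift of $j$, which is where the indices $j-c-2$ and $j-c-1$ in \eqref{12431423e3} come from, while the factor $1-\delta_{j,n}$ records that for $j=n$ the second-ascent block admits only one of the two possible splittings. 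In every case the terms $a_{n-1}(i,i+1)$, $a_{n-1}(i,i+2)$, $a_{n-1}(i,j)$, $a_{n-1}(i,j-1)$ arise from the subcase in which a single suitably chosen large entry may be removed without disturbing the first two letters. The boundary identities $a_n(n-1,n)=a_{n-2}$ and $a_n(n-2,n)=a_{n-2}$ I would dispatch directly: with $j\in\{n-1,n\}$ the monotonicity constraints pin down essentially the whole tail, and $\pi\mapsto\pi_3\cdots\pi_n$ (relabelled) is a bijection onto $\mathcal{S}_{n-2}(1243,1423)$.

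The step I expect to be the main obstacle is the book-keeping of the small entries $1,\dots,i-1$: characterising exactly which of them may sit in the peeled block and which may be scattered through the tail $\pi_{k+1}\cdots\pi_n$, proving that the inessential ones can be freely deleted and then freely reinserted without creating or destroying an occurrence of $1243$ or $1423$, and thereby pinning down the summation ranges of $a$, $b$ and $c$ so that the decomposition is a genuine bijection between the two sides of each identity. The monotonicity observations of the first step and the second-ascent decomposition are the levers that make this casework tractable, but it is still where nearly all of the work lies.
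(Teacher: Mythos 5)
Your decomposition---constraining the entries above $j$ and the entries of $(i,j)$ via the two patterns, locating the second ascent, and peeling off $i$, $j$ and the $c$ small letters of the initial decreasing run with the multiplicity $\binom{i-a-1}{c}$---is exactly the argument the paper gives, organized there as a case analysis on the third letter $k$ (either $k<i$, yielding the triple sums via the block $i\,j\,x_1\cdots x_c\,a\,d\,\rho$, or $k$ equal to one of at most two large values, each deletable and yielding the $a_{n-1}(\cdot,\cdot)$ terms). The deferred bookkeeping you flag (deletability and reinsertion of the small letters, and the resulting ranges of $a$, $b$, $c$) is precisely where the paper's proof also does its work, so this is essentially the same proof.
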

\begin{proof}
Let $k$ denote the third letter of a member of $\mathcal{A}_{n,i,j}$.  We will consider various cases based on $k$ in the proofs of \eqref{12431423e1}-\eqref{12431423e3}.  To show \eqref{12431423e1}, first observe that for members of $\mathcal{A}_{n,i,i+1}$ where $i \leq n-2$, one has either $k=i+2$ or $k<i$, for $k>i+2$ would ensure an occurrence of $1243$ which isn't permissible.  In the first case, the third letter is superfluous concerning avoidance of either pattern and thus may be deleted leading to $a_{n-1}(i,i+1)$ possibilities.  On the other hand, in the latter case, we have that $\pi \in \mathcal{A}_{n,i,i+1}$ can be decomposed as $\pi=i(i+1)x_1\cdots x_cad\rho$, where $a \in [i-1]$, $x_1,\ldots,x_c \in [a+1,i-1]$ with $x_1>\cdots>x_c$, $d>a$ and $\rho$ denotes the terminal section of $\pi$.  Note that $d=i+2$ or $d \in [a+1,i-1]-\{x_1,\ldots,x_c\}$ since $d>i+2$ is again not allowed.  Further, the letters $x_1,\ldots,x_c,i,i+1$ may be removed from $\pi$ in light of the ascent $a,d$, leading to a member $\pi'\in \mathcal{A}_{n-c-2,a,b}$ for some $b$ after reduction of letters.  Note that within $\pi'$, the $b$ parameter value can range over $[a+1,i-c]$ since the largest possible value it may assume is $i+2-(c+2)=i-c$.  As there are $\binom{i-a-1}{c}$ ways in which to choose the $x_i$, considering all possible $a$, $c$ and $b$ implies that the number of $\pi \in \mathcal{A}_{n,i,i+1}$ for which $k<i$ is given by $\sum_{a=1}^{i-1}\sum_{c=0}^{i-a-1}\sum_{b=a+1}^{i-c}\binom{i-a-1}{c}a_{n-c-2}(a,b)$, which establishes \eqref{12431423e1}.

A similar argument applies to \eqref{12431423e2} except that now we have $k<i$ or $k=i+1,i+3$ for members of $\mathcal{A}_{n,i,i+2}$ where $i \leq n-3$.  Note that $k=i+3$ leads to $a_{n-1}(i,i+2)$ possibilities, as $k$ may be deleted in this case.  If $k<i$, then making use of the previous notation, we have $d \in [a+1,i-1]\cup\{i+1,i+3\}$ and thus $b$ may range in $[a+1,i-c+1]$ in this case. Finally, to show \eqref{12431423e3}, we consider first the case when $j<n$ within a member of $\mathcal{A}_{n,i,j}$ where $j \geq i+3$.  Here, we would have $k\in[i-1]\cup\{j-1,j+1\}$, as $k\in[i+1,j-2]$ would lead to an occurrence of 1423 as witnessed by $ijk(k+1)$.  If $k=j-1$ or $k=j+1$, then $k$ may be deleted in either case giving $a_{n-1}(i,j-1)$ and $a_{n-1}(i,j)$ possibilities, respectively.  If $k<i$, then we must have $d=j-1$ or $d=j+1$, for other values of $d$ would lead to an occurrence of $1243$ or $1423$.  Considering all possible $a$ and $c$ then yields the two double sum expressions occurring in the $j<n$ case of \eqref{12431423e3}.  Combining this with the preceding then implies \eqref{12431423e3} when $j<n$. On the other hand, if $j=n$, then $k=j+1$ or $d=j+1$ does not occur in the preceding argument, which implies \eqref{12431423e3} in this case and completes the proof.
\end{proof}

We will make use of the following generating functions:
\begin{align*}
A(x,v,w)&=\sum_{n\geq2}\sum_{a=1}^n\sum_{b=1}^na_n(a,b)v^aw^bx^n,\\
A^+(x,v,w)&=\sum_{n\geq2}\sum_{a=1}^{n-1}\sum_{b=a+1}^na_n(a,b)v^aw^bx^n,\\
A^-(x,v,w)&=\sum_{n\geq2}\sum_{a=2}^n\sum_{b=1}^{a-1}a_n(a,b)v^aw^bx^n,\\
C(x,v)&=\sum_{n\geq2}\sum_{a=1}^{n-1}a_n(a,a+1)v^ax^n,\\
D(x,v)&=\sum_{n\geq2}\sum_{a=1}^{n-2}a_n(a,a+2)v^ax^n,\\
B(x,v,w)&=\sum_{n\geq4}\sum_{a=1}^{n-3}\sum_{b=a+3}^na_n(a,b)v^aw^bx^n.
\end{align*}
Clearly, $A^+(x,v,w)=wC(x,vw)+w^2D(x,vw)+B(x,v,w)$, by the definitions.

Translating \eqref{12431423e0}-\eqref{12431423e3} in terms of generating functions yields
\begin{align*}
A^-(x,v,w)&=\sum_{n\geq2}\sum_{a=2}^n\sum_{b=1}^{a-1}a_{n-1}(b)v^aw^bx^n\\
&=\sum_{n\geq2}\sum_{b=1}^{n-1}a_{n-1}(b)\frac{v^{b+1}-v^{n+1}}{1-v}w^bx^n\\
&=v^2wx^2+\frac{vx}{1-v}A(x,vw,1)-\frac{v^2x}{1-v}A(vx,w,1),
\end{align*}
\begin{align*}
&wC(x,vw)-vw^2x^2A(vwx,1,1)-vw^2x^2-v^2w^3x^3\\
&=wxC(x,vw)
+\sum_{n\geq2}\sum_{i=1}^{n-2}\sum_{a=1}^{i-1}\sum_{c=0}^{i-a-1}\sum_{b=a+1}^{i-c}\binom{i-a-1}{c}a_{n-c-2}(a,b)v^iw^{i+1}x^n\\
&=wxC(x,vw)
+w\sum_{a\geq1}\sum_{c\geq0}\sum_{i\geq1}\sum_{n\geq i+c+a+2}\sum_{b=a+1}^{i+a}\binom{i+c-1}{c}a_{n-c-2}(a,b)(vw)^{i+a+c}x^n\\
&=wxC(x,vw)+w\sum_{a\geq1}\sum_{i\geq1}\sum_{n\geq a+i}\sum_{b=a+1}^{i+a}a_{n}(a,b)\frac{(vw)^{i+a}x^{n+2}}{(1-vwx)^i}\\
&=wxC(x,vw)\\
&+\frac{w}{vwx+vw-1}\sum_{n\geq2}\sum_{a=1}^{n-1}\sum_{b=a+1}^na_n(a,b)(\frac{x^{n+2}(vw)^{n+1}}{(1-vwx)^{n-a}}-\frac{x^{n+2}(vw)^b}{(1-vwx)^{b-a-1}})\\
&=wxC(x,vw)\\
&+\frac{vw^2x^2}{vwx+vw-1}A^+(\frac{vwx}{1-vwx},1-vwx,1)-\frac{(1-vwx)wx^2}{vwx+vw-1}A^+(x,1-vwx,\frac{vw}{1-vwx}),
\end{align*}
\begin{align*}
&w^2D(x,vw)-w^2x^2A(vwx,1,1)+v^2w^4x^4\\
&=w^2x(C(x,vw)-vwx^2A(vwx,1,1))+w^2xD(x,vw)\\
&+w^2\sum_{n\geq3}\sum_{i=1}^{n-3}\sum_{a=1}^{i-1}\sum_{c=0}^{i-1-a}\sum_{b=a+1}^{i-c+1}
\binom{i-1-a}{c}a_{n-2-c}(a,b)(vw)^ix^n\\
&=w^2x(C(x,vw)-vwx^2A(vwx,1,1))+w^2xD(x,vw)\\
&+w^2\sum_{a\geq1}\sum_{c\geq0}\sum_{i\geq1}\sum_{n\geq i+a+3}\sum_{b=a+1}^{i+a+1}\binom{i+c-1}{c}a_{n-2}(a,b)(vw)^{i+a+c}x^{n+c}\\
&=w^2x(C(x,vw)-vwx^2A(vwx,1,1))+w^2xD(x,vw)\\
&+w^2x^2\sum_{a\geq1}\sum_{n\geq a+2}\sum_{b=a+2}^na_n(a,b)\frac{(vw)^{i+a}x^n}{(1-vwx)^i}
+w^2x^2\sum_{a\geq1}\sum_{n\geq a+2}\sum_{i=1}^{n-a-1}a_n(a,a+1)\frac{(vw)^{i+a}x^n}{(1-vwx)^i}\\
&=w^2x(C(x,vw)-vwx^2A(vwx,1,1))+w^2xD(x,vw)\\ &+\frac{w^2x^2(1-vwx)}{vwx+vw-1}A^+(\frac{vwx}{1-vwx},1-vwx,1)
-\frac{wx^2(1-vwx)^2}{v(vwx+vw-1)}A^+(x,1-vwx,\frac{vw}{1-vwx})\\
&-w^2x^2C(x,vw),
\end{align*}
and
\begin{align*}
&B(x,v,w)\\
&=wxB(x,v,w)+w^3xD(x,vw)\\
&+\sum_{n\geq3}\sum_{i=1}^{n-3}\sum_{j=i+3}^n\sum_{a=1}^{i-1}\sum_{c=0}^{i-a-1}
\binom{i-a-1}{c}a_{n-c-2}(a,j-c-2)v^iw^jx^n\\
&+\sum_{n\geq3}\sum_{i=1}^{n-3}\sum_{j=i+3}^n\biggl((1-\delta_{j,n})(a_{n-1}(i,j)v^iw^jx^n\\
&\qquad\qquad\qquad\qquad\qquad\qquad+\sum_{a=1}^{i-1}\sum_{c=0}^{i-a-1}\binom{i-a-1}{c}a_{n-c-2}(a,j-c-1)v^iw^jx^n\biggr)\\
&=wxB(x,v,w)+w^3xD(x,vw)\\
&+\sum_{n\geq3}\sum_{i=1}^{n-3}\sum_{j=i+3}^n\sum_{a=1}^{i-1}\sum_{c=0}^{i-a-1}
\binom{i-a-1}{c}a_{n-c-2}(a,j-c-2)v^iw^jx^n\\
&+\sum_{n\geq3}\sum_{i=1}^{n-3}\sum_{j=i+3}^{n-1}\left(a_{n-1}(i,j)v^iw^jx^n
+\sum_{a=1}^{i-1}\sum_{c=0}^{i-a-1}\binom{i-a-1}{c}a_{n-c-2}(a,j-c-1)v^iw^jx^n\right)\\
&=wxB(x,v,w)+w^3xD(x,vw)\\
&+\sum_{a\geq1}\sum_{n\geq a+2}\sum_{j=a+2}^n\sum_{i=1}^{j-a-1}
a_n(a,j)\frac{v^{i+a}w^{j+2}x^{n+2}}{(1-vwx)^i}\\
&+\sum_{n\geq3}\sum_{i=1}^{n-3}\sum_{j=i+3}^{n-1}\left(a_{n-1}(i,j)v^iw^jx^n
+\sum_{a=1}^{i-1}\sum_{c=0}^{i-a-1}\binom{i-a-1}{c}a_{n-c-2}(a,j-c-1)v^iw^jx^n\right)\\
&=wxB(x,v,w)+w^3xD(x,vw)\\
&+\frac{w^2x^2(1-vwx)}{vwx+v-1}A^+(x,1-vwx,\frac{vw}{1-vwx})
-\frac{vw^2x^2}{vwx+v-1}A^+(x,v,w)\\
&+\sum_{n\geq3}\sum_{i=1}^{n-3}\sum_{j=i+3}^{n-1}\left(a_{n-1}(i,j)v^iw^jx^n
+\sum_{a=1}^{i-1}\sum_{c=0}^{i-a-1}\binom{i-a-1}{c}a_{n-c-2}(a,j-c-1)v^iw^jx^n\right)\\
&=wxB(x,v,w)+w^3xD(x,vw)\\
&+\frac{w^2x^2(1-vwx)}{vwx+v-1}A^+(x,1-vwx,\frac{vw}{1-vwx})
-\frac{vw^2x^2}{vwx+v-1}A^+(x,v,w)\\
&+xB(x,v,w) +\frac{1}{vwx+v-1}\sum_{a\geq1}\sum_{n\geq a+3}\sum_{j=a+3}^n
a_n(a,j)(\frac{v^{j-1}w^{j+1}x^{nn+2}}{(1-vwx)^{j-2-a}}
-v^{a+1}w^{j+1}x^{nn+2})\\
&=wxB(x,v,w)+w^3xD(x,vw)\\
&+\frac{w^2x^2(1-vwx)}{vwx+v-1}A^+(x,1-vwx,\frac{vw}{1-vwx}) -\frac{vw^2x^2}{vwx+v-1}A^+(x,v,w)\\
&+xB(x,v,w) +\frac{wx^2(1-vwx)^2}{v(vwx+v-1)}B(x,1-vwx,\frac{vw}{1-vwx})
-\frac{vwx^2}{vwx+v-1}B(x,v,w).
\end{align*}

Hence, one gets the following system of functional equations.
\begin{proposition}\label{pr1243x1342a}
We have
\begin{align*}
A^+(x,v,w)&=wC(x,vw)+w^2D(x,vw)+B(x,v,w),\\
A^-(x,v,w)&=v^2wx^2+\frac{vx}{1-v}A(x,vw,1)-\frac{v^2x}{1-v}A(vx,w,1),\\
C(x,vw)&=vwx^2A(vwx,1,1)+vwx^2+v^2w^2x^3+xC(x,vw)\\
&+\frac{vwx^2}{vwx+vw-1}A^+(\frac{vwx}{1-vwx},1-vwx,1)\\
&-\frac{(1-vwx)x^2}{vwx+vw-1}A^+(x,1-vwx,\frac{vw}{1-vwx}),\\
D(x,vw)&=x^2A(vwx,1,1)-v^2w^2x^4+x(C(x,vw)-vwx^2A(vwx,1,1))+xD(x,vw)\\ &+\frac{x^2(1-vwx)}{vwx+vw-1}A^+(\frac{vwx}{1-vwx},1-vwx,1)\\
&-\frac{x^2(1-vwx)^2}{vw(vwx+vw-1)}A^+(x,1-vwx,\frac{vw}{1-vwx})-x^2C(x,vw),\\
B(x,v,w)&=wxB(x,v,w)+w^3xD(x,vw)\\
&+\frac{w^2x^2(1-vwx)}{vwx+v-1}A^+(x,1-vwx,\frac{vw}{1-vwx}) -\frac{vw^2x^2}{vwx+v-1}A^+(x,v,w)\\
&+xB(x,v,w) +\frac{wx^2(1-vwx)^2}{v(vwx+v-1)}B(x,1-vwx,\frac{vw}{1-vwx})
-\frac{vwx^2}{vwx+v-1}B(x,v,w).
\end{align*}
\end{proposition}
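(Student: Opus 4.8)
The plan is to obtain each of the five identities by translating the recurrences of Lemma~\ref{12431423L1}, together with the elementary relation \eqref{12431423e0}, into generating function identities using the definitions of $A$, $A^+$, $A^-$, $B$, $C$ and $D$. Two of them are essentially immediate. The first, $A^+(x,v,w)=wC(x,vw)+w^2D(x,vw)+B(x,v,w)$, comes from splitting the defining sum for $A^+$ according to whether the second letter $b$ equals $a+1$, equals $a+2$, or is at least $a+3$, factoring out $w$ (resp.\ $w^2$) and replacing $v$ by $vw$ in the first two cases. For the $A^-$ identity, multiply \eqref{12431423e0} by $v^iw^jx^n$ and sum over $1\le j<i\le n$ and $n\ge2$; the inner sum over $i$ is geometric and yields $\frac{v^{j+1}-v^{n+1}}{1-v}$, and separating the two pieces produces the terms $\frac{vx}{1-v}A(x,vw,1)$ and $-\frac{v^2x}{1-v}A(vx,w,1)$, the stray $v^2wx^2$ accounting for the base value $a_2(2,1)=1$.

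The identities for $C$, $D$ and $B$ are the substantive part and all follow one template. For each, multiply the corresponding recurrence \eqref{12431423e1}, \eqref{12431423e2} or \eqref{12431423e3} by the natural monomial ($v^ix^n$, $v^ix^n$ or $v^iw^jx^n$) and sum over the admissible ranges. The terms $a_{n-1}(i,\cdot)$ on the right of each recurrence, which correspond to the subcase in which the third letter is extraneous and may be deleted, contribute multiples of $x$ times $C$, $D$ or $B$, while the boundary values $a_n(n-1,n)=a_{n-2}$ and $a_n(n-2,n)=a_{n-2}$ give rise to the $A(vwx,1,1)$ terms together with small monomial corrections. The convolution-type sums $\sum_{a}\sum_{c}\binom{i-a-1}{c}a_{n-c-2}(a,\cdot)$ are processed by reindexing so that $c$ ranges freely and then applying $\sum_{c\ge0}\binom{i+c-1}{c}t^c=(1-t)^{-i}$ with $t=vwx$; collapsing the $c$-sum in this way is exactly what produces the factors $(1-vwx)^{-1}$ and the argument substitutions $v\mapsto1-vwx$, $w\mapsto\frac{vw}{1-vwx}$ and $x\mapsto\frac{vwx}{1-vwx}$ seen in the statement. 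A final geometric summation in the remaining auxiliary index splits each such contribution into the two $A^+$- (or $B$-) terms on the right-hand side, with the correction terms such as $-x^2C(x,vw)$ and $-\frac{vwx^2}{vwx+v-1}A^+(x,v,w)$ coming from completing finite geometric sums to infinite ones, in the same way the term $-u_{n-1}(i-1,n)q^{n-1}$ arose in the proof of Lemma~\ref{gtreL2}.

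The main obstacle is the bookkeeping in these last three derivations: one must keep careful track of how the ranges $1\le a\le i-1$, $a+1\le b\le i-c$ (and its variants), $0\le c\le i-a-1$ and $i+3\le j\le n$ transform under each reindexing, handle the $(1-\delta_{j,n})$ factor of \eqref{12431423e3} so that the diagonal $j=n$ (where $a_n(i,n)$ has its own closed form) is neither dropped nor double counted, and correctly collect all of the boundary corrections. Once the algebra is organized, comparing coefficients of $v^iw^jx^n$ on the two sides returns \eqref{12431423e0}--\eqref{12431423e3}, which establishes the system.
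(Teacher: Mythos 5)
Your proposal matches the paper's own derivation essentially step for step: the first identity is read off from the definitions by splitting on $b-a$, the $A^-$ identity comes from the geometric sum $\sum_i v^i=\frac{v^{j+1}-v^{n+1}}{1-v}$ applied to \eqref{12431423e0}, and the $C$, $D$, $B$ identities are obtained exactly as you describe, by collapsing the $c$-sums via $\sum_{c\ge0}\binom{i+c-1}{c}t^c=(1-t)^{-i}$ with $t=vwx$ (which produces the substituted arguments) and then completing finite geometric sums, with the boundary terms $a_n(n-1,n)=a_n(n-2,n)=a_{n-2}$ yielding the $A(vwx,1,1)$ pieces plus low-order monomials. What remains is only the bookkeeping you identify, which is all the paper itself supplies, so the approach is correct and the same.
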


By Proposition \ref{pr1243x1342a}, one may express $B(x,v,w)$ in terms the generating functions $A,C,D$ and $C,D$ in terms of $A$.  Substituting these relations into the last equation from Proposition \ref{pr1243x1342a}, one obtains
\begin{align}\label{sseq2}
&\frac{vx-wx-v-x+1}{vwx+v-1}A^+(x,v,w)\\
&=\frac{w^2x^2(vwx-v-1)A^+(\frac{vwx}{1-vwx},1-vwx,1)}{vwx+vw-1}\notag\\
&+\frac{vwx^2(w^2-1)(vwx-1)A^+(x,1-vwx,\frac{vw}{1-vwx})}{(vwx+v-1)(vwx+vw-1)}\notag\\
&+w^2x^2(vwx-v-1)A(vwx,1,1)+x^2vw^2(vw^2x^2-vwx-1).\notag
\end{align}
Replacing $w$ by $w/v$ in \eqref{sseq2}, we have
\begin{align}\label{sseq2a}
&\frac{v^2x-wx-v^2-vx+v}{v(wx+v-1)}A^+(x,v,w/v)\\
&=\frac{w^2x^2(wx-v-1)A^+(\frac{wx}{1-wx},1-wx,1)}{v^2(wx+w-1)}
+\frac{wx^2(w^2-v^2)(wx-1)A^+(x,1-wx,\frac{w}{1-wx})}{v^2(wx+v-1)(wx+w-1)}\notag\\
&+\frac{w^2x^2}{v^2}(wx-v-1)A(wx,1,1)+\frac{x^2w^2}{v^2}(w^2x^2-vwx-v).\notag
\end{align}

We now seek to determine a formula for $A^+(x,v,w)$ using \eqref{sseq2a}.  First note that by \eqref{introe2} and the main result from \cite{Kr}, we have
\begin{align}\label{sseq0}
A(x,1,1)=\frac{1-3x-\sqrt{1-6x+x^2}}{2},
\end{align}
which implies $A^+(x,v,1)$ may determined independently of $A^-(x,v,1)$ since $A$ occurs in \eqref{sseq2a} only through the $A(wx,1,1)$ term. Suppose for a moment that $A(x,v,1)$ is as in Corollary \ref{co1243x1423} below.  Then the second equation in Proposition \ref{pr1243x1342a} above at $w=1$, together with the fact $A(x,v,1)=A^+(x,v,1)+A^-(x,v,1)$, gives a linear system of equations in the quantities $A^+$ and $A^-$.  This yields
\begin{align}\label{sseq1}
A^+(x,v,1)=\frac{vx^2(1+v-vx)}{2(vx-v-2x+1)}\sqrt{1-6vx+v^2x^2}
-\frac{vx^2(v^2x^2-v^2x-4vx+3v-1)}{2(vx-v-2x+1)},
\end{align}
which we will assume for now to aid in solving \eqref{sseq2a}.

Then taking $v=v_0=\frac{1-x+\sqrt{1-2(1+2w)x+(1+4w)x^2}}{2(1-x)}$ in \eqref{sseq2a}, and using \eqref{sseq0} and \small\eqref{sseq1}, implies
\begin{align*}
&A^+(x,1-wx,\frac{w}{1-wx})\\
&=\frac{(1-x)(wx+w-1)}{4(wx-w-2x+1)(wx-1)}\sqrt{1-6wx+w^2x^2}\sqrt{(1-x)(1-x-4vwx)}\\
&+\frac{1-w+(2w^2-w-2)x-(w-1)(2w^2+4w+1)x^2+w(2w^2-2w-1)x^3}{4(wx-w-2x+1)(wx-1)}\sqrt{1-6wx+w^2x^2}\\
&-\frac{(wx+w-1)(wx^2-wx-3x+1)}{4(wx-w-2x+1)(wx-1)}\sqrt{(1-x)(1-x-4vwx)}\\
&+\frac{w-1-(3w^2-4)x+(8w^3+w^2-9w-3)x^2}{4(wx-w-2x+1)(wx-1)}\\
&+\frac{-w(2w^3+8w^2-9w-4)x^3+w^2(2w^2-2w-1)x^4}{4(wx-w-2x+1)(wx-1)}.
\end{align*}\normalsize
Substituting this expression into \eqref{sseq2}, and using \eqref{sseq0} and \eqref{sseq1}, we obtain
\begin{align}\label{pr1243x1342aAP}
&A^+(x,v,w)\\
&=\frac{(w^2-1)(1-x)vwx^2\sqrt{1-6vwx+v^2w^2x^2}\sqrt{(1-x)(1-x-4vwx)}}{4(vwx-vw-2x+1)(vx-wx-v-x+1)}\notag\\
&+\frac{(1-x-2vwx)vw(1-x)x^2\sqrt{1-6vwx+v^2w^2x^2}}
{4(vwx-vw-2x+1)(vx-wx-v-x+1)}\notag\\
&+\frac{(1-2v^2x^2+4v^2x-2v^2-x^2-2x-2vwx(1-x))vw^3x^2\sqrt{1-6vwx+v^2w^2x^2}}
{4(vwx-vw-2x+1)(vx-wx-v-x+1)}\notag\\
&+\frac{(1-w^2)(vwx^2-vwx-3x+1)vwx^2\sqrt{(1-x)(1-x-4vwx)}}{4(vwx-vw-2x+1)(vx-wx-v-x+1)}\notag\\
&+\frac{(3(w^2-1)x^2+4(1-2w)x-w^2+4w-1)vwx^2}{4(vwx-vw-2x+1)(vx-wx-v-x+1)}\notag\\
&-\frac{((w^2-1)x^3+8(w^2+1)x^2-(7w^2+4w+11)x+4w+4)v^2w^2x^2}{4(vwx-vw-2x+1)(vx-wx-v-x+1)}\notag\\
&-\frac{2((1-x)(w^2x^2+x^2+3x-3)+vwx(1-x)^2)v^3w^3x^2}{4(vwx-vw-2x+1)(vx-wx-v-x+1)}.\notag
\end{align}

One may verify using programming that this expression for $A^+(x,v,w)$ indeed satisfies \eqref{sseq2} and \eqref{sseq1} and thus is the solution of \eqref{sseq2} that is sought.  We may now determine $A^-(x,v,w)$.  By the second equation in Proposition \ref{pr1243x1342a}, we have
\begin{align*}
A^-(x,v,1)&=v^2x^2+\frac{vx}{1-v}A(x,v,1)-\frac{v^2x}{1-v}A(vx,1,1)\\
&=v^2x^2+\frac{vx}{1-v}(A^+(x,v,1)+A^-(x,v,1))-\frac{v^2x}{1-v}A(vx,1,1).
\end{align*}

Solving for $A^-(x,v,1)$ in this last equation, and using \eqref{sseq0} and \eqref{sseq1}, yields
$$A^-(x,v,1)=\frac{v^2x(x-1)(vx^2-vx-3x+1+(x-1)\sqrt{1-6vx+v^2x^2})}{2(vx-v-2x+1)}.$$
Hence, by Proposition \ref{pr1243x1342a}, we get the following explicit formula for $A^-(x,v,w)$:
\small\begin{align}\label{pr1243x1342aAM}
&A^-(x,v,w)\\
&=\frac{((1+v-2vx)x+(v^2x-v^2+vx-1)wx-(1-x)(1-vx)vw^2)v^2wx^2}{2(vwx-2vx-w+1)(vwx-vw-2x+1)}\sqrt{1-6vwx+v^2w^2x^2}\notag\\
&+\frac{(6vx^2-3(v+1)x+2-(2v^2x^3+2v(v+1)x^2-(3v^2+2v+3)x+2v+2)w)v^2wx^2}{2(vwx-2vx-w+1)(vwx-vw-2x+1)}\notag\\
&+\frac{(v^2x^3-v^2x^2+vx^3+3vx^2-3vx-x^2-3x+3-(1-x)(1-vx)vwx)v^3w^3x^2}{2(vwx-2vx-w+1)(vwx-vw-2x+1)}.\notag
\end{align}\normalsize

We then have the following formula for $A(x,v,w)$.

\begin{theorem}
The generating function for the joint distribution of the first and second letter statistics on $\mathcal{S}_n(1243,1423)$ for $n \geq 1$is given by
$$A^+(x,v,w)+A^-(x,v,w),$$
where $A^+(x,v,w)$ and $A^-(x,v,w)$ are as in \eqref{pr1243x1342aAP} and \eqref{pr1243x1342aAM}, respectively.
\end{theorem}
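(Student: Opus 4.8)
The plan is to assemble the theorem from the machinery already in place, so the proof will be short. By splitting each permutation in $\mathcal{S}_n(1243,1423)$ according to whether its second letter is smaller or larger than its first, one has $A(x,v,w)=A^+(x,v,w)+A^-(x,v,w)$, and it therefore suffices to identify these two summands with \eqref{pr1243x1342aAP} and \eqref{pr1243x1342aAM}. The first point I would make is a uniqueness statement: the recurrences of Lemma \ref{12431423L1}, together with \eqref{12431423e0} and the initial values $a_1(1)=a_2(1,2)=a_2(2,1)=a_3(i,j)=1$, express each $a_n(i,j)$ in terms of the $a_m(\cdot,\cdot)$ with $m<n$. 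Hence the array $(a_n(i,j))$ is determined recursively, so $A(x,v,w)$ — equivalently the pair $(A^+,A^-)$ — is the \emph{unique} formal power series solution of the system in Proposition \ref{pr1243x1342a}. Consequently it is enough to exhibit one solution of that system and verify that it coincides with the asserted formulas.

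Next I would carry out the reduction already indicated in the text: from Proposition \ref{pr1243x1342a} one solves for $B$ in terms of $A,C,D$ and for $C,D$ in terms of $A$, collapsing the whole system to the single functional equation \eqref{sseq2} for $A^+$, into which $A$ enters only through $A(vwx,1,1)$, known in closed form from \eqref{sseq0} and the Schr\"oder generating function \eqref{introe2}. Rescaling $w\mapsto w/v$ gives \eqref{sseq2a}, whose kernel vanishes at $v=v_0=\frac{1-x+\sqrt{1-2(1+2w)x+(1+4w)x^2}}{2(1-x)}$. Applying the kernel method at $v=v_0$ expresses the $v$-free quantity $A^+(x,1-wx,\tfrac{w}{1-wx})$ in terms of $A(wx,1,1)$ and the boundary series $A^+(\tfrac{wx}{1-wx},1-wx,1)$; for the latter I would invoke \eqref{sseq1}, which is exactly the ansatz furnished by the Lin--Kim conjecture itself (via the second equation of Proposition \ref{pr1243x1342a} at $w=1$ together with the conjectural form of $A(x,v,1)$ in Corollary \ref{co1243x1423}). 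Substituting the resulting expression back into \eqref{sseq2} and simplifying produces the candidate \eqref{pr1243x1342aAP} for $A^+(x,v,w)$; then solving the second equation of Proposition \ref{pr1243x1342a} for $A^-$ and using \eqref{sseq0}, \eqref{sseq1} yields \eqref{pr1243x1342aAM} for $A^-(x,v,w)$. Adding the two gives the claimed formula.

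The step I expect to be the main obstacle — and the one that makes the argument rigorous despite the heuristic use of the conjecture — is the final verification that \eqref{pr1243x1342aAP} genuinely satisfies \eqref{sseq2} together with the boundary relation \eqref{sseq1}. Since the candidate involves the two nested radicals $\sqrt{1-6vwx+v^2w^2x^2}$ and $\sqrt{(1-x)(1-x-4vwx)}$, each taken on the branch equal to $1$ at $x=0$, substituting it into \eqref{sseq2} creates a sum of algebraic terms that must be shown to cancel identically; this is a long but purely mechanical symbolic computation, most cleanly discharged with a computer algebra system after clearing denominators and rationalizing the radicals. One must also confirm that the $w=1$ specialization of \eqref{pr1243x1342aAP}+\eqref{pr1243x1342aAM} recovers \eqref{introe1}, which simultaneously closes the apparent circularity — the conjecture was used only to \emph{locate} a candidate, not to justify it — and re-proves the conjecture for the pair $(1243,1423)$. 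Once this verification is in hand, the uniqueness of the solution of Proposition \ref{pr1243x1342a} forces $A(x,v,w)=A^+(x,v,w)+A^-(x,v,w)$ with $A^\pm$ as in \eqref{pr1243x1342aAP} and \eqref{pr1243x1342aAM}, completing the proof.
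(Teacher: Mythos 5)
Your proposal follows essentially the same route as the paper: reduce the system of Proposition \ref{pr1243x1342a} to the single equation \eqref{sseq2}, use the conjectured form \eqref{sseq1} together with the kernel method at $v=v_0$ to locate the candidate \eqref{pr1243x1342aAP}, verify by symbolic computation that it satisfies \eqref{sseq2} and \eqref{sseq1}, and then recover $A^-$ from the second equation of the system. Your explicit remarks on uniqueness of the formal power series solution and on the $w=1$ specialization closing the apparent circularity are welcome clarifications of points the paper leaves implicit, but they do not change the argument.
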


Substituting $w=1$ in the prior theorem and finding $vx+A(x,v,1)$, one obtains the following result.

\begin{corollary}\label{co1243x1423}
The generating function for the distribution of the first letter statistic on $\mathcal{S}_n(1243,1423)$ for $n \geq 1$ is given by
$$\frac{vx(2-3v-3x+3vx)+vx(v+x-vx)(vx+\sqrt{1-6vx+v^2x^2})}{2(1-v-2x+vx)}.$$
\end{corollary}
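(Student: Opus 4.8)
The plan is to deduce the Corollary from the preceding theorem by setting $w=1$ and restoring the missing length-one term. Since a permutation beginning with $i$ cannot have second letter $i$, summing $a_n(i,j)$ over $j$ yields $|\mathcal{S}_{n,i}(1243,1423)|$ for every $n\geq 2$, whence $A(x,v,1)=A^+(x,v,1)+A^-(x,v,1)=\sum_{n\geq 2}\sum_{i=1}^n|\mathcal{S}_{n,i}(1243,1423)|v^ix^n$. As the only member of $\mathcal{S}_{1,1}(1243,1423)$ is the single letter $1$, the generating function for the first letter statistic over all $n\geq 1$ equals $vx+A(x,v,1)$, and it is this quantity that must be shown to equal the stated expression.

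For the computation itself I would not substitute $w=1$ into \eqref{pr1243x1342aAP}--\eqref{pr1243x1342aAM} directly, but rather use the one-variable specializations already obtained en route: $A^+(x,v,1)$ is given by \eqref{sseq1}, and $A^-(x,v,1)$ by the displayed formula immediately preceding \eqref{pr1243x1342aAM}. Placing both over the common denominator $2(vx-v-2x+1)=2(1-v-2x+vx)$ and adjoining $vx$, one checks that the coefficients of the two $\sqrt{1-6vx+v^2x^2}$ terms add up to $vx(v+x-vx)$, so the radical part of the answer is $\frac{vx(v+x-vx)\sqrt{1-6vx+v^2x^2}}{2(1-v-2x+vx)}$, while the rational numerators sum, after extracting a factor of $vx$, to $vx(2-3v-3x+3vx)+v^2x^2(v+x-vx)$; together these give the asserted closed form. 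This is a purely mechanical simplification. The only place a little care is needed is that, should one prefer to pass through \eqref{pr1243x1342aAP}, the two terms carrying the extraneous radical $\sqrt{(1-x)(1-x-4vwx)}$ vanish at $w=1$ because of their factors $w^2-1$, and the surviving terms, which superficially exhibit the squared denominator $(vx-v-2x+1)^2$ once the two linear factors collide, collapse to a single power of $vx-v-2x+1$.

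The genuine subtlety lies upstream and has already been dealt with just before the statement: formulas \eqref{pr1243x1342aAP} and \eqref{pr1243x1342aAM} were obtained by feeding \eqref{sseq1}---in effect, the very equality the Corollary records, restricted to $w=1$---into the kernel-method computation. To make this rigorous one verifies by direct substitution that the candidate \eqref{pr1243x1342aAP}, together with \eqref{sseq1}, satisfies \eqref{sseq2}, and then observes that the recurrences of Lemma~\ref{12431423L1} with the initial data $a_1(1)=a_2(1,2)=a_2(2,1)=a_3(i,j)=1$ determine every $a_n(i,j)$ uniquely, so that the unique formal power series solving the system must coincide with the verified candidate. I expect this functional-equation check to be the main obstacle in practice---it is the one non-routine ingredient---after which the Corollary follows from the elementary algebra above. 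Finally, I would note that the resulting expression is precisely \eqref{introe1} with $y$ replaced by $v$, which exhibits $|\mathcal{S}_{n,i}(1243,1423)|=S_{n,i}$ for all $1\leq i\leq n$ and thereby settles, via the reversal bijection, the pair $(3421,3241)$ of the conjecture.
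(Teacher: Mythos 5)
Your proposal is correct and follows essentially the same route as the paper: the corollary is obtained by specializing the kernel-method solution to $w=1$ and adjoining the length-one term $vx$, with the circularity of having fed \eqref{sseq1} into the derivation resolved exactly as you describe, by verifying that the candidate satisfies \eqref{sseq2} and \eqref{sseq1} and invoking uniqueness of the solution determined by the recurrences. Your shortcut of summing the already-available one-variable specializations $A^+(x,v,1)$ and $A^-(x,v,1)$ rather than setting $w=1$ in \eqref{pr1243x1342aAP}--\eqref{pr1243x1342aAM} is only a cosmetic difference, and your coefficient check for the radical term is accurate.
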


\subsection{The case $(1243,1342)$.}

We first write a recurrence for $a_n(i,j)$ when $i<j$.

\begin{lemma}\label{12431342L1}
If $1 \leq i <j \leq n-1$, then
\begin{align}\label{12431342L1e1}
a_n(i,j)&=\sum_{k=i+1}^{j-1}a_{n-1}(i,k)+\sum_{a=1}^{i-1}\sum_{c=0}^{i-a-1}\sum_{b=a+1}^{j-c-2}\binom{i-a-1}{c}a_{n-c-2}(a,b)\notag\\
&\quad+\delta_{i+1,j}\cdot\left(a_{n-1}(i,i+1)+\sum_{a=1}^{i-1}\sum_{c=0}^{i-a-1}\binom{i-a-1}{c}a_{n-c-2}(a,i-c)\right),
\end{align}
with $a_n(i,n)=a_{n-1}(i)$ for $1 \leq i \leq n-1$.
\end{lemma}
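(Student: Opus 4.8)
The plan is to mimic the proof of Lemma~\ref{12431423L1}: condition on the third letter $k=\pi_3$ of a member $\pi=\pi_1\cdots\pi_n$ of $\mathcal{A}_{n,i,j}:=\mathcal{S}_{n,i}(1243,1342)$ with second letter $j>i$, and in each case isolate the letters of $\pi$ that are irrelevant to the avoidance of $1243$ and $1342$ and may therefore be deleted (with the usual relabelling). First dispose of the boundary instance $j=n$. Since $\pi_2=n$ is the largest letter and only $i$ lies to its left, while each of $1243$ and $1342$ has its maximal entry in the third coordinate and hence needs two entries to its left, the letter $n$ cannot occur in any copy of either pattern; thus $n$ is extraneous, deleting it gives a bijection onto $\bigcup_{k}\mathcal{A}_{n-1,i,k}$, and we obtain $a_n(i,n)=a_{n-1}(i)$.

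Now assume $1\le i<j\le n-1$ and put $k=\pi_3$. The first step is to restrict the possible values of $k$. If $k>j$, then $ijk$ is a copy of $123$, so to avoid $1342$ (with $ijk$ in the roles $1,3,4$) no tail letter may lie in $(i,j)$; when $j\ge i+2$ this is impossible, since the nonempty set of values in $(i,j)$ must occur somewhere to the right of position $3$, whereas when $j=i+1$ it forces $k=i+2$ (no tail letter may lie in $(i+1,k)$ either, lest a $1243$ appear), and in that subcase $i+2$ is superfluous and may be removed, which produces the term $a_{n-1}(i,i+1)$ inside the factor multiplied by $\delta_{i+1,j}$. We may therefore assume $k<j$ from now on. If $i<k<j$, then any occurrence of $1243$ or $1342$ that uses $\pi_2=j$ must use $\pi_1=i$ as its smallest entry with $j$ in the second coordinate (or else have $j$ as its smallest entry, with the other three letters exceeding $j$), and replacing $j$ by the smaller letter $k$ (which still exceeds $i$) is checked to yield a comparable occurrence avoiding $j$; hence $j$ is superfluous, and deleting it gives a bijection of $\{\pi\in\mathcal{A}_{n,i,j}:\pi_3=k\}$ onto $\mathcal{A}_{n-1,i,k}$, whose surjectivity follows by running the same argument in reverse. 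Summing over $k$ produces $\sum_{k=i+1}^{j-1}a_{n-1}(i,k)$.

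It remains to handle $k<i$, which proceeds exactly as in Lemma~\ref{12431423L1}: one writes $\pi=ij\,x_1\cdots x_c\,a\,d\,\rho$, where $a\in[1,i-1]$ is the bottom of the first ascent occurring in positions $\ge 3$ (such an ascent must exist, since otherwise $\pi_2\cdots\pi_n$ is decreasing, forcing $\pi_2=n$), the $x_t\in[a+1,i-1]$ form a decreasing run, $d>a$, and $\rho$ is the remaining suffix. Given the ascent pair $a,d$, the letters $x_1,\dots,x_c,i,j$ are all irrelevant to $1243$ and $1342$ and may be deleted; after reduction one obtains a member of $\mathcal{A}_{n-c-2,a,b}$ with $b$ the reduced value of $d$, the $x_t$ contribute a factor $\binom{i-a-1}{c}$, and summing over $a$, $c$ and $b$ gives the remaining sums in \eqref{12431342L1e1}. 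The step I expect to require the most care is determining the exact upper bound on $b$: here replacing the pattern $1423$ of Lemma~\ref{12431423L1} by $1342$ changes the admissible range of $d$, and one must show that $d$ may be as large as the value that reduces to $j-c-2$ in general, but that exactly one further value is available when $j=i+1$, which is precisely the $\sum_{a}\sum_c\binom{i-a-1}{c}a_{n-c-2}(a,i-c)$ correction carried by the $\delta_{i+1,j}$ factor. Proving this, together with verifying bijectivity of each deletion map (the nontrivial direction being that reinserting the removed letters never creates a new occurrence), completes the argument; as a consistency check, specializing \eqref{12431342L1e1} to $j=i+1$ and merging the two sums over $b$ recovers \eqref{12431423e1}.
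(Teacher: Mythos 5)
Your overall strategy matches the paper's: dispose of $j=n$, then condition on the third letter $k$ and split into $k>j$, $i<k<j$, and $k<i$. The $j=n$ case and the $k>j$ analysis are fine, and your treatment of $i<k<j$ (replacing $j$ by $k$ inside any occurrence and observing that one lands on an occurrence of $1243$ or $1342$ either way) is a valid, slightly different route to the paper's conclusion that $j$ is extraneous there.

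However, in the central case $k<i$ you have only stated, not proved, the two claims that carry all the content of the lemma. First, the admissible range of the ascent top $d$: you say ``one must show that $d$ may be as large as the value that reduces to $j-c-2$ \dots but that exactly one further value is available when $j=i+1$,'' and then stop. The actual argument is short but essential: if $j\ge i+2$ and $d>j$, then every element of $[i+1,j-1]$ (nonempty, and none of them equal to $i,j,d$ or to the sub-$i$ letters $x_1,\dots,x_c,a$) lies to the right of $d$, so $i\,j\,d\,t$ is a $1342$; hence $d\in[a+1,j-1]\setminus\{x_1,\dots,x_c,i\}$, whose maximum reduces to $j-c-2$ after the $c+1$ smaller deleted letters are accounted for, while for $j=i+1$ the value $d=j+1$ creates no $1342$ and reduces to $i-c$, giving the $\delta_{i+1,j}$ correction. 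Second, deletability of $i$ and $j$ with respect to $1342$ is not automatic from the ascent $a,d$ (that ascent only disposes of $1243$); the paper needs a case split: if $d<i$, the letters of $[d+1,n]$ to the right of $d$ form an increasing subsequence (else $a\,d\,u\,v$ would be a $1243$), so no $1342$ can begin with $i\,j$; if $i<d<j$, one argues instead that all letters of $[i+1,d-1]$ precede those of $[d+1,n]\setminus\{j\}$, the latter again increasing. You assert ``the letters $x_1,\dots,x_c,i,j$ are all irrelevant'' before the range of $d$ is even pinned down, but that irrelevance is exactly what fails when $d>j$. Until these two points (and the reinsertion direction of the bijection, which you also defer) are supplied, the proof is an outline rather than a proof.
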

\begin{proof}
The formula when $j=n$ is obvious, so assume $j<n$.  Let $\mathcal{A}_{n,i,j}$ denote the subset of $\mathcal{S}_{n,i}(1243,1342)$ having second letter $j$. Let $\pi \in \mathcal{A}_{n,i,j}$ where $1 \leq i<j\leq n-1$ and $k$ denote the third letter of $\pi$.  Suppose $k<j$, noting that this is indeed a requirement if $j \geq i+2$, for otherwise $j<n$ would imply a $1342$ would occur.  If $i+1 \leq k \leq j-1$, then the letter $j$ is extraneous and thus may be deleted from $\pi$ since all elements of $[k+1,n]-\{j\}$ must occur in increasing order with all elements of $[i+1,k]$ occurring to the left of those in $[k+1,n]-\{j\}$.  This implies the first summation formula in \eqref{12431342L1e1}.  Otherwise $k<i$ and we may write $\pi=ijx_1\cdots x_cad\rho$, where $1\leq a<x_1<\cdots <x_c\leq i-1$ and $d>a$.  Note that $d<j$ if $j \geq i+2$ in order to avoid a $1342$ of the form $ijdt$ for some $t\in[i+1,j-1]$.

We argue now that $d<j$ implies that $i$ and $j$ may be deleted from $\pi$.  Clearly, the letters $i$ and $j$ are extraneous concerning the avoidance of $1243$ in light of the ascent $a,d$ where $a<i$.  They are also irrelevant concerning $1342$, whence they may be deleted.  To see this, note that if $d<i$, then members of $[d+1,n]$ occurring to the right of $d$ must form an increasing subsequence due to $a$ preceding $d$ and thus no $1342$ can start with $ij$.  The same conclusion is reached if $i<d<j$, for in this case all letters in $[i+1,d-1]$ occur to the left of those in $[d+1,n]-\{j\}$, with the latter forming an increasing subsequence. Since $x_1,\ldots,x_c$ may clearly also be deleted from $\pi$ as $a<x_c$, one is  left with $\pi'\in\mathcal{A}_{n-c-2,a,b}$ for some $a$ and $b$.  Note that $b \in [a+1,j-c-2]$ (after reducing letters) since $d\in[a+1,j-1]-\{x_1,\ldots,x_c,i\}$.  Considering all possible $a$, $c$ and $b$ then yields the triple sum expression in \eqref{12431342L1e1} and finishes the case when $j \geq i+2$.  On the other hand, if $j=i+1$, then $k=j+1$ is possible without introducing an occurrence of $1342$, in which case $k$ may be deleted resulting in a member of $\mathcal{A}_{n-1,i,i+1}$.  Likewise, $d=j+1$ is also possible in the decomposition of $\pi$ above.  Combining these two additional cases then accounts for the second line in formula \eqref{12431342L1e1} and completes the proof.
\end{proof}

From \eqref{12431342L1e1}, we may deduce the following further useful formulas.

\begin{lemma}\label{12431342L2}
If $1 \leq i \leq n-2$, then
\begin{equation}\label{12431342L2e0}
a_n(i,i+1)=a_n(i,i+2)
\end{equation}
and
\begin{equation}\label{12431342L2e1}
a_n(i,i+1)=\sum_{\ell=1}^ia_{n-1}(\ell,i+1).
\end{equation}
\end{lemma}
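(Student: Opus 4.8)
The plan is to derive both identities of Lemma~\ref{12431342L2} directly from the recurrence \eqref{12431342L1e1} of Lemma~\ref{12431342L1}, supplemented only by the boundary relation $a_m(i,m)=a_{m-1}(i)$, the trivial relation \eqref{12431423e0}, and the identity $a_m=S_m$. First I would specialize \eqref{12431342L1e1} to $j=i+1$ (which lands in the $\delta_{i+1,j}$ branch) and absorb the extra term $\sum_{a=1}^{i-1}\sum_{c=0}^{i-a-1}\binom{i-a-1}{c}a_{n-c-2}(a,i-c)$ into the triple sum, where it merely extends the inner range from $b\le i-c-1$ to $b\le i-c$. This gives
\[
a_n(i,i+1)=a_{n-1}(i,i+1)+\sum_{a=1}^{i-1}\sum_{c=0}^{i-a-1}\sum_{b=a+1}^{i-c}\binom{i-a-1}{c}a_{n-c-2}(a,b),\qquad 1\le i\le n-2.
\]
Specializing \eqref{12431342L1e1} instead to $j=i+2$ (valid for $i\le n-3$), the leading sum $\sum_{k=i+1}^{j-1}a_{n-1}(i,k)$ collapses to the single term $a_{n-1}(i,i+1)$, the $\delta$-term disappears, and the triple sum acquires inner range $b\le j-c-2=i-c$; so the right-hand side is the very same expression, which proves \eqref{12431342L2e0} for $i\le n-3$.

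Granting the displayed formula for $a_n(i,i+1)$, identity \eqref{12431342L2e1} reduces to
\[
\sum_{a=1}^{i-1}\sum_{c=0}^{i-a-1}\sum_{b=a+1}^{i-c}\binom{i-a-1}{c}a_{n-c-2}(a,b)=\sum_{\ell=1}^{i-1}a_{n-1}(\ell,i+1).
\]
For $\ell<i$ one has $i+1\ge\ell+2$, so \eqref{12431342L1e1} applied in length $n-1$ (with $i+1\le n-2$) gives $a_{n-1}(\ell,i+1)=\sum_{k=\ell+1}^{i}a_{n-2}(\ell,k)+\sum_{a=1}^{\ell-1}\sum_{c=0}^{\ell-a-1}\sum_{b=a+1}^{i-c-1}\binom{\ell-a-1}{c}a_{n-c-3}(a,b)$. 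Summing over $\ell$, the first pieces assemble into $\sum_{1\le a<b\le i}a_{n-2}(a,b)$, which is exactly the $c=0$ part of the left-hand side; for the rest I would interchange the order of summation and reindex $c\mapsto c+1$ on the left, so that the two sides match term by term once $\sum_{\ell=a+c+1}^{i-1}\binom{\ell-a-1}{c}=\binom{i-a-1}{c+1}$, which is the hockey-stick identity $\sum_{m=c}^{M}\binom{m}{c}=\binom{M+1}{c+1}$. This establishes \eqref{12431342L2e1} for $i\le n-3$.

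It remains to treat the boundary index $i=n-2$, where the second letter takes the maximal value $n-1$ and the main branch of \eqref{12431342L1e1} no longer applies. Here $a_{n-1}(\ell,n-1)=a_{n-2}(\ell)$ for all $\ell$ by the boundary relation, so $\sum_{\ell=1}^{n-2}a_{n-1}(\ell,n-1)=a_{n-2}=S_{n-2}$; on the other hand $a_n(n-2,n)=a_{n-1}(n-2)=S_{n-2}$ by the boundary relation together with \eqref{12431423e0} and $a_m=S_m$, and a short direct argument (in $\pi=(n-2)(n-1)\pi_3\cdots\pi_n$ the values $n-2$ and $n-1$ have too few larger values to be the smallest element of a length-four pattern, and an entry in position $1$ or $2$ cannot be a non-minimal pattern entry, so every occurrence lies within $\pi_3\cdots\pi_n$) gives $a_n(n-2,n-1)=S_{n-2}$; the two identities at $i=n-2$ follow. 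The step I expect to require the most care is the sum rearrangement in the second paragraph: keeping the ranges of $a$, $b$, $c$, $\ell$ consistent through the interchange and the shift $c\mapsto c+1$ so that the hockey-stick collapse aligns exactly. As an aside, \eqref{12431342L2e0} also admits a self-contained bijective proof via the involution on $\mathcal{S}_{n,i}(1243,1342)$ that swaps the two values $i+1$ and $i+2$; because the first two letters are forced to be $i$ and one of $i+1,i+2$, one checks this swap neither creates nor destroys an occurrence of $1243$ or $1342$.
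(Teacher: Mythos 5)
Your proof is correct and follows essentially the same route as the paper: both identities are extracted from \eqref{12431342L1e1} by comparing the $j=i+1$ and $j=i+2$ specializations and then collapsing $\sum_{\ell=1}^{i-1}a_{n-1}(\ell,i+1)$ via the hockey-stick identity $\sum_{\ell=a+c+1}^{i-1}\binom{\ell-a-1}{c}=\binom{i-a-1}{c+1}$. The only difference is that you spell out the boundary case $i=n-2$ (which the paper dismisses as easily seen) and organize the final comparison as a term-by-term match rather than as the paper's computation of the difference $a_n(i,i+1)-\sum_{\ell=1}^{i-1}a_{n-1}(\ell,i+1)$.
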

\begin{proof}
Both equalities are easily seen to hold if $i=n-2$, so assume $1 \leq i \leq n-3$.  Taking $j=i+1$ and $j=i+2$ in \eqref{12431342L1e1}, and comparing the results, then completes the proof of \eqref{12431342L2e0}.  For \eqref{12431342L2e1}, first observe
\begin{align*}
\sum_{\ell=1}^{i-1}a_{n-1}(\ell,i+1)&=\sum_{\ell=1}^{i-1}\sum_{k=\ell+1}^ia_{n-2}(\ell,k)+\sum_{\ell=1}^{i-1}\sum_{a=1}^{\ell-1}\sum_{c=0}^{\ell-a-1}\sum_{b=a+1}^{i-c-1}\binom{\ell-a-1}{c}a_{n-c-3}(a,b)\\
&=\sum_{\ell=1}^{i-1}\sum_{k=\ell+1}^ia_{n-2}(\ell,k)+\sum_{a=1}^{i-2}\sum_{c=0}^{i-a-2}\sum_{b=a+1}^{i-c-1}a_{n-c-3}(a,b)\sum_{\ell=a+c+1}^{i-1}\binom{\ell-a-1}{c}\\
&=\sum_{\ell=1}^{i-1}\sum_{k=\ell+1}^ia_{n-2}(\ell,k)+\sum_{a=1}^{i-2}\sum_{c=1}^{i-a-1}\sum_{b=a+1}^{i-c}\binom{i-a-1}{c}a_{n-c-2}(a,b).
\end{align*}
Then by \eqref{12431342L2e0} and \eqref{12431342L1e1} when $j=i+2$, we have
\begin{align*}
&a_n(i,i+1)-\sum_{\ell=1}^{i-1}a_{n-1}(\ell,i+1)=a_n(i,i+2)-\sum_{\ell=1}^{i-1}a_{n-1}(\ell,i+1)\\
&=a_{n-1}(i,i+1)-\sum_{\ell=1}^{i-1}\sum_{k=\ell+1}^ia_{n-2}(\ell,k)\\
&\quad+\left(\sum_{a=1}^{i-1}\sum_{c=0}^{i-a-1}\sum_{b=a+1}^{i-c}-\sum_{a=1}^{i-2}\sum_{c=1}^{i-a-1}\sum_{b=a+1}^{i-c}\right)\binom{i-a-1}{c}a_{n-c-2}(a,b)\\
&=a_{n-1}(i,i+1)-\sum_{\ell=1}^{i-1}\sum_{k=\ell+1}^ia_{n-2}(\ell,k)+\sum_{a=1}^{i-1}\sum_{b=a+1}^ia_{n-2}(a,b)=a_{n-1}(i,i+1),
\end{align*}
which completes the proof of \eqref{12431342L2e1}.
\end{proof}

To summarize, we have the following recurrence for $a_n(i,j)$:
\begin{align}\label{r1243x1342}
a_n(i,j)&=a_{n-1}(j),\quad 1\leq j<i\leq n,\notag\\
a_n(i,n)&=a_{n-1}(i),\quad 1\leq i\leq n-1,\notag\\
a_n(i,i+1)&=\sum_{\ell=1}^ia_{n-1}(\ell,i+1),\quad 1\leq i\leq n-2,\\
a_n(i,i+2)&=a_n(i,i+1), \quad  1\leq i\leq n-2,\notag\\
a_n(i,j)&=\sum_{k=i+1}^{j-1}a_{n-1}(i,k)+\sum_{a=1}^{i-1}\sum_{c=0}^{i-a-1}\sum_{b=a+1}^{j-c-2}\binom{i-a-1}{c}a_{n-c-2}(a,b),\notag
\end{align}
for $4 \leq i+3 \leq j \leq n-1$.

In this case, we state the recurrence formulas satisfied by the corresponding distribution polynomials which will aid in translating \eqref{r1243x1342} to functional equations as they are not too lengthy.  Define $A_n^+(v,w)=\sum_{i=1}^{n-1}\sum_{j=i+1}^na_n(i,j)v^iw^j$, $A_n^-(v,w)=\sum_{i=2}^{n}\sum_{j=1}^{i-1}a_n(i,j)v^iw^j$ and $A_n(v,w)=\sum_{i=1}^{n}\sum_{j=1}^na_n(i,j)v^iw^j$ for $n\geq2$. Clearly, $A_n(v,w)=A^+_n(v,w)+A^-_n(v,w)$ for all $n\geq2$. Further, we define
$$C_n(v)=\sum_{i=1}^{n-2}a_n(i,i+1)v^i \quad \text{and} \quad B_n(v,w)=\sum_{i=1}^{n-4}\sum_{j=i+3}^{n-1}a_n(i,j)v^iw^j.$$

Then \eqref{r1243x1342} may be rewritten in terms of these distributions as
\begin{align*}
A_n^-(v,w)&=\frac{v}{1-v}A_{n-1}(vw,1)-\frac{v^{n+1}}{1-v}A_{n-1}(w,1),\\
A_n^+(v,w)&=B_n(v,w)+w^2(C_n(vw)-(vw)^{n-2}A_{n-2}(1,1))+wC_n(vw)+w^nA_{n-1}(1,w),\\
C_n(v)&=\frac{1}{v}A^+_{n-1}(1,v),\\
B_n(v,w)&=\frac{w}{1-w}(B_{n-1}(v,w)+w^2(C_{n-1}(vw)-A_{n-3}(1,1)(vw)^{n-3}))\\
&-\frac{w^n}{1-w}(B_{n-1}(v,1)+C_{n-1}(v)-A_{n-3}(1,1)v^{n-3})\\
&+\frac{w^3}{1-w}(C_{n-1}(vw)-A_{n-3}(1,1)v^{n-3}w^{n-3}) -\frac{w^n}{1-w}(C_{n-1}(v)-A_{n-3}(1,1)v^{n-3})\\
&+\sum_{i=1}^{n-4}\sum_{j=i+3}^{n-1}\sum_{a=1}^{i-1}
\sum_{c=0}^{i-a-1}\sum_{b=a+1}^{j-c-2}\binom{i-1-a}{c}a_{n-c-2}(a,b)v^iw^j.
\end{align*}

Define $A^\pm(x,v,w)=\sum_{n\geq2}A_n^\pm(v,w)x^n$ and $A(x,v,w)=\sum_{n\geq2}A_n(v,w)x^n$.  Further, define $B(x,v,w)=\sum_{n\geq2}B_n(v,w)x^n$ and $C(x,v)=\sum_{n\geq2}C_n(v)x^n$.  Rewriting the preceding recurrences in terms of generating functions yields the following result.

\begin{proposition}\label{pr1}
We have
\begin{align*}
A^-(x,v,w)&=v^2wx^2+\frac{vx}{1-v}A(x,vw,1)-\frac{v^2x}{1-v}A(vx,w,1),\\
A^+(x,v,w)&=vw^2x^2-vw^3x^3+B(x,v,w)+w^2(C(x,vw)-x^2A(vwx,1,1))\\
&+wC(x,vw)+wxA(wx,v,1),
\end{align*}
where
\begin{align*}
C(x,v)&=\frac{x}{v}A^+(x,1,v),\\
B(x,v,w)&=\frac{wx(1-v)}{(1-v-vwx)(1-w)}(B(x,v,w)-B(wx,v,1))\\
&+\frac{2(1-vw)w^3x}{(1-vw-vwx)(1-w)}C(x,vw)-\frac{2wx(1-v)}{(1-v-vwx)(1-w)}C(wx,v)\\
&+\frac{x^2w^2(1-vwx)^2}{(1-v-vwx)(1-vw-vwx)}
(B(\frac{vwx}{1-vwx},1-vwx,1)-B(x,1-vwx,\frac{vw}{1-vwx}))\\
&+\frac{2x^2w^2(1-vwx)^2}{(1-v-vwx)(1-vw-vwx)}C(\frac{vwx}{1-vwx},1-vwx).
\end{align*}
\end{proposition}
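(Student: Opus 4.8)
The plan is to derive the three displayed identities by multiplying, for each of $A_n^{-}$, $A_n^{+}$, $C_n$ and $B_n$, the polynomial recurrences stated immediately above the proposition by $x^{n}$, summing over $n\ge 2$, and collecting the finitely many boundary contributions coming from the initial values $a_1(1)=a_2(1,2)=a_2(2,1)=a_3(i,j)=1$ and from the degenerate cases $j\in\{n-1,n\}$ and $i=n-2$ appearing in Lemmas~\ref{12431342L1} and~\ref{12431342L2}. Two elementary manipulations do essentially all the work: interchanging the order of finite summations so as to expose the innermost index, and replacing a truncated geometric sum $\sum_{r=s}^{t}z^{r}$ by $\dfrac{z^{s}-z^{t+1}}{1-z}$, so that the ``main'' part contributes a factor $\dfrac{1}{1-z}$ while the upper cutoff $z^{t+1}$ produces the boundary corrections.

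First I would dispatch the two short equations. For $A^{-}$, the relation $a_n(i,j)=a_{n-1}(j)$ for $j<i$ gives $A_n^{-}(v,w)=\sum_{i\ge 2}v^{i}\sum_{j=1}^{i-1}a_{n-1}(j)w^{j}$; summing the geometric series in $i$ rewrites the right-hand side as $\dfrac{v}{1-v}A_{n-1}(vw,1)-\dfrac{v^{\,n+1}}{1-v}A_{n-1}(w,1)$ together with the single $n=2$ term $v^{2}wx^{2}$, and summing over $n$ (reindexing the $\dfrac{v^{n+1}}{1-v}A_{n-1}$ contribution into $\dfrac{v^{2}x}{1-v}A(vx,w,1)$) yields the first displayed identity. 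The equation $C(x,v)=\dfrac{x}{v}A^{+}(x,1,v)$ is nothing but a restatement of \eqref{12431342L2e1}. The $A^{+}$ identity is then assembled term by term from the recurrence for $A_n^{+}(v,w)$: the sums of $B_n$, of $(w+w^{2})C_n(vw)$ (using \eqref{12431342L2e0} to fold $a_n(i,i+2)$ into the $C_n$ bookkeeping) and of the $A_{n-1}(\cdot)$ term become $B(x,v,w)$, $(w+w^{2})C(x,vw)$ and $wxA(wx,v,1)$ respectively, while the degenerate contributions proportional to $(vw)^{n-2}A_{n-2}(1,1)$ collapse to $w^{2}x^{2}A(vwx,1,1)$ and the remaining low-order initial data produces the explicit polynomial correction $vw^{2}x^{2}-vw^{3}x^{3}$.

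The genuinely laborious part is the equation for $B(x,v,w)$. Applying the geometric-sum identity to $\sum_{k=i+1}^{j-1}a_{n-1}(i,k)$ and summing over $n$ already accounts for the $B(x,v,w)$, $B(wx,v,1)$, $C(x,vw)$ and $C(wx,v)$ terms of the target equation, the truncation remainders contributing exactly the pieces carrying a factor $w^{n}$ in the recurrence for $B_n$. The delicate piece is the five-fold sum $\sum_{i}\sum_{j}\sum_{a}\sum_{c}\sum_{b}\binom{i-1-a}{c}a_{n-c-2}(a,b)v^{i}w^{j}x^{n}$. Here I would put $m=n-c-2$ and swap all summations so that $a$, $b$, $m$ sit on the outside---these being exactly the indices of $a_m(a,b)$, i.e.\ of $A^{+}$ and $B$---and then evaluate the inner sums over $c$, $i$, $j$ in closed form. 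The sum over $c$, together with the extra $x^{c}$ coming from $x^{m+c+2}$, collapses the binomial via $\sum_{c}\binom{N}{c}z^{c}=(1+z)^{N}$, and it is precisely this collapse that forces the substitutions $x\mapsto\dfrac{vwx}{1-vwx}$ and $v\mapsto 1-vwx$ in the arguments of $A^{+}$ and $B$; the sum over $j$ (ranging from $b+c+2$ to $m+c+1$) supplies the $\dfrac{1}{1-w}$ factors and the further substitution $w\mapsto\dfrac{vw}{1-vwx}$; and the sum over $i$ is a hockey-stick identity. Matching these closed forms against the definitions of $A^{+}$, $B$ and $C$ then yields the remaining terms of the proposition.

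I expect the main obstacle to be the bookkeeping of the boundary strips rather than any single hard identity: one must keep precise track of which pairs $(i,j)$ belong to the index ranges defining $B_n$, $C_n$, $A_n^{+}$ and $A_n^{-}$, so that when those ranges are artificially extended to make the geometric and binomial identities applicable, the spurious terms introduced---typically of the shape $A_{m}(1,1)(vw)^{m}$, $v^{i}w^{j}x^{2}$, or a $w^{n}$-weighted tail---are cancelled exactly by the correction terms already built into the recurrences of Lemmas~\ref{12431342L1} and~\ref{12431342L2}. Once all four generating-function identities have been obtained in this way, nothing further is needed: they are, verbatim, the four equations asserted in Proposition~\ref{pr1}.
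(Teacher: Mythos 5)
Your plan is essentially the paper's own proof: the paper likewise passes from the recurrences of Lemmas \ref{12431342L1} and \ref{12431342L2} to recurrences for the polynomials $A_n^{\pm}(v,w)$, $C_n(v)$, $B_n(v,w)$ and then ``rewrites'' these as the stated functional equations, with the substituted arguments $\frac{vwx}{1-vwx}$, $1-vwx$, $\frac{vw}{1-vwx}$ arising exactly as you describe from collapsing the binomial sum after reindexing (in the paper's bookkeeping this collapse is the negative-binomial series $\sum_{c\ge0}\binom{i+c-1}{c}z^c=(1-z)^{-i}$ rather than the finite binomial theorem, but the effect is the same, as the parallel computation carried out in full for the pair $(1243,1423)$ shows). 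Nothing in your outline would fail, and the boundary-strip bookkeeping you anticipate is precisely what the $A_{n-3}(1,1)v^{n-3}$- and $w^{n}$-type correction terms in the paper's intermediate recurrences for $B_n$ and $C_n$ are there to absorb.
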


By mathematical programming, one may verify the following solution of the foregoing system of functional equations.

\begin{theorem}\label{th1243x1342}
The generating function for the joint distribution of the first and second letter statistics on $\mathcal{S}_n(1243,1342)$ for $n \geq 1$is given by $A(x,v,w)=A^+(x,v,w)+A^-(x,v,w)$, where
\begin{align*}
&A^+(x,v,w)\\
&=\frac{(1-v+(v^2x-2v^2+vx-x)w-vx(x-2)(v-1)w^2)vw^2x^2}{2(1-v+wx(v-2))(1-2vw-x(1-vw))}\sqrt{v^2w^2x^2-6vwx+1}\\
&+\frac{(1-v+(6v^2-4v-x(4v^2-2v+1))w+xv(x(v^2+4v-4)-2v^2-6v+6)w^2)vw^2x^2}{2(1-v+wx(v-2))(1-2vw-x(1-vw))}\\
&-\frac{(x-2)(v-1)v^3w^5x^4}{2(1-v+wx(v-2))(1-2vw-x(1-vw))}\\
&=vw^2x^2+w^2v(vw+w+1)x^3+w^2v(2v^2w^2+2vw^2+2vw+2w^2+w+1)x^4+\cdots
\end{align*}
and
\begin{align*}
&A^-(x,v,w)\\
&=\frac{(x(1+v-2vx)+x(v^2x-v^2+vx-1)w-v(x-1)(vx-1)w^2)v^2wx^2}{2(1-vw-x(2-vw))(1-w-vx(2-w))}\sqrt{v^2w^2x^2-6vwx+1}\\
&+\frac{((3x-2)(w-1)+(-w^2x^2-3w^2x-2wx^2+3w^2+2wx+6x^2-2w-3x)v)v^2wx^2}{2(1-vw-x(2-vw))(1-w-vx(2-w))}\\
&+\frac{(w^2x+wx^2-w^2+3wx-2x^2-3w-2x+3-(x-1)(w-1)vwx)v^4w^2x^3}{2(1-vw-x(2-vw))(1-w-vx(2-w))}\\
&=v^2wx^2+(vw+v+1)v^2wx^3+2(v^2w^2+v^2w+v^2+vw+v+1)v^2wx^4+\cdots.
\end{align*}
Moreover, the generating functions counting the members of $\mathcal{S}_{n}(1243,1342)$ starting with an ascent of size greater than two or of size exactly one and whose second letter is not $n$ in either case according to the first and second letter statistics are given respectively by
\begin{align*}
B(x,v,w)&=\frac{(vwx-2wx^2+2wx+x-1)w^2x^2}{2(1-2vw-x(1-vw))(1-v-wx(2-v))}\sqrt{v^2w^2x^2-6vwx+1}\\
&+\frac{(1-x+(3vx-4v+2x-2)wx-(2x+v-6)vw^2x^2)w^2x^2}{2(1-2vw-x(1-vw))(1-v-wx(2-v))}\\
&=2vw^4x^5+2(4vw+2w+1)vw^4x^6+2\big(17v^2w^2+10vw^2+5vw+4w^2\\
&\quad+2w+1\big)vw^4x^7+\cdots
\end{align*}
and
\begin{align*}
C(x,v)&=\frac{-vx^2(vx^2-2vx-3x+2+(x-2)\sqrt{v^2x^2-6vx+1})}{2(1-x-2v+vx)}\\
&=vx^3+(2v+1)vx^4+(6v^2+3v+1)vx^5+(22v^3+11v^2+4v+1)vx^6+\cdots.
\end{align*}
\end{theorem}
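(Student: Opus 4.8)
The plan is to solve explicitly the system of functional equations recorded in Proposition \ref{pr1}, arguing in close parallel with the case $(1243,1423)$ treated above. I would first observe that the recurrences \eqref{r1243x1342}, together with the stated initial values, determine all the numbers $a_n(i,j)$ uniquely: each $a_n(i,j)$ is expressed through entries $a_m(\cdot,\cdot)$ with $m<n$, save only the explicit within-row identity $a_n(i,i+2)=a_n(i,i+1)$. Hence the system in Proposition \ref{pr1} has a unique formal power series solution, and it suffices to exhibit \emph{one} quadruple $(A^+,A^-,B,C)$ satisfying it. I would also note at the outset that $A(x,1,1)=\frac{1-3x-\sqrt{1-6x+x^2}}{2}$, by \eqref{introe2} and the fact that $a_n=S_n$.

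To locate a candidate, I would invoke the conjecture itself in the specialization $w=1$, just as \eqref{sseq1} was used for $(1243,1423)$: assume provisionally that $vx+A(x,v,1)$ equals the right-hand side of \eqref{introe1} with $y$ replaced by $v$, i.e. the formula of Corollary \ref{co1243x1423} (this is precisely what the conjecture asserts for the pair $(1243,1342)$). The first equation of Proposition \ref{pr1} then gives the stated closed form for $A^-(x,v,w)$ outright, since it expresses $A^-(x,v,w)$ through $A(x,vw,1)$ and $A(vx,w,1)$ alone; in particular $A^+(x,v,1)=A(x,v,1)-A^-(x,v,1)$ becomes known. Using $C(x,v)=\frac{x}{v}A^+(x,1,v)$ and the second equation of Proposition \ref{pr1} to write $B$ in terms of $A^+$ and known functions, I would eliminate $B$ and $C$ so as to arrive at a single functional equation for $A^+(x,v,w)$, in which $A^+$ also occurs at the transformed points $(wx,v,1)$, $(\tfrac{vwx}{1-vwx},1-vwx,1)$, $(x,1-vwx,\tfrac{vw}{1-vwx})$ and at $(x,1,\cdot)$; after first determining $A^+(x,1,\cdot)$ (hence $C$) from the $v=1$ specialization, the $(x,1,\cdot)$-terms become known as well. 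A preliminary substitution then normalizes the equation so that the remaining difficult copy of $A^+$ has arguments depending only on $vwx$ — as was achieved by $w\mapsto w/v$ in the passage from \eqref{sseq2} to \eqref{sseq2a} — and the kernel method, choosing $v$ to be the appropriate root of the resulting kernel so as to annihilate the coefficient of $A^+(x,v,w)$, determines that copy; feeding it back yields the closed forms for $A^+(x,v,w)$, $B(x,v,w)$ and $C(x,v)$ claimed in the theorem.

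The proof proper is the verification that these candidate formulas satisfy \emph{every} equation of Proposition \ref{pr1} identically. After clearing denominators and the radicals $\sqrt{1-6vwx+v^2w^2x^2}$, $\sqrt{1-6vx+v^2x^2}$ and those produced by the substitutions $v\mapsto 1-vwx$, $w\mapsto\tfrac{vw}{1-vwx}$, $x\mapsto\tfrac{vwx}{1-vwx}$, each equation reduces to a polynomial identity checkable with a computer algebra system; I would also match the first several Taylor coefficients against a direct enumeration of $\mathcal{S}_n(1243,1342)$ by second letter. By the uniqueness noted above, the verified formulas are then the true generating functions, so the provisional use of the conjecture introduces no circularity, the final check being independent of it. Finally, setting $w=1$ in $A^+(x,v,w)+A^-(x,v,w)$ and adding $vx$ collapses the expression to the right-hand side of \eqref{introe1} with $y\mapsto v$, whence $|\mathcal{S}_{n,i}(1243,1342)|=S_{n,i}$ for all $n\ge1$ and $1\le i\le n$.

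I expect the main obstacle to be the verification step rather than any conceptual point: the candidate expressions are bulky, involve nested radicals, and must be tested against equations containing three separate argument substitutions, so confirming the identities amounts to a sizeable symbolic computation in which bookkeeping slips are the chief danger and automation is essential. A secondary difficulty is finding the right preliminary substitution in the kernel step, since without it the kernel substitution merely replaces one unknown function of three variables by another rather than by a function of two.
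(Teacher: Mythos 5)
Your proposal follows essentially the same route as the paper: derive the system of functional equations of Proposition \ref{pr1} from the recurrences \eqref{r1243x1342}, use the conjectured form of $A(x,v,1)$ together with the kernel method (as in the $(1243,1423)$ case) to locate candidate closed forms, and then let the actual proof consist of a computer-algebra verification that these candidates satisfy the system, uniqueness of the formal power series solution removing any circularity. The paper compresses this last step into ``by mathematical programming, one may verify the following solution,'' so your write-up is if anything more explicit about why the provisional use of the conjecture is harmless.
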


Substituting $w=1$ in the prior theorem and finding $vx+A(x,v,1)$, one obtains the following result.

\begin{corollary}\label{co1243x1342}
The generating function for the distribution of the first letter statistic on $\mathcal{S}_n(1243,1342)$ for $n \geq 1$ is given by
$$\frac{vx(2-3v-3x+3vx)+vx(v+x-vx)(vx+\sqrt{1-6vx+v^2x^2})}{2(1-v-2x+vx)}.$$
\end{corollary}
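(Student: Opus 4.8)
The plan is to obtain the corollary as a direct specialization of Theorem~\ref{th1243x1342}, by setting $w=1$. Recall that $A(x,v,w)=\sum_{n\geq 2}A_n(v,w)x^n$ with $A_n(v,w)=\sum_{i,j}a_n(i,j)v^iw^j$; hence $A_n(v,1)=\sum_{i=1}^n\bigl(\sum_{j\neq i}a_n(i,j)\bigr)v^i=\sum_{i=1}^na_n(i)v^i=\sum_{i=1}^n|\mathcal{S}_{n,i}(1243,1342)|\,v^i$. Since the lone permutation of length~$1$ is not recorded by $A$ and contributes $vx$, the generating function $\sum_{n\geq1}\bigl(\sum_{i=1}^n|\mathcal{S}_{n,i}(1243,1342)|\,v^i\bigr)x^n$ for the first letter statistic is exactly $vx+A(x,v,1)=vx+A^+(x,v,1)+A^-(x,v,1)$. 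So the task reduces to evaluating the two explicit formulas of Theorem~\ref{th1243x1342} at $w=1$, adding them together with $vx$, and simplifying.

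Carrying this out, I would first substitute $w=1$ into the displayed formula for $A^+(x,v,w)$: its denominator factors as $2(1-2v-x+vx)(1-v-2x+vx)$ at $w=1$, and one checks that the common factor $1-2v-x+vx$ divides the numerator, so that $A^+(x,v,1)$ reduces to a rational function plus a rational multiple of $\sqrt{1-6vx+v^2x^2}$, in fact to the expression~\eqref{sseq1}. Substituting $w=1$ into $A^-(x,v,w)$, the factor $1-w-vx(2-w)$ in its denominator degenerates to $-vx$, but the bracketed polynomial multiplying the radical collapses to $-v(1-x)^2$ and the remaining polynomial in the numerator is likewise divisible by $vx$, so the apparent pole cancels and one recovers the formula for $A^-(x,v,1)$ displayed just before~\eqref{pr1243x1342aAM}. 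Adding $A^+(x,v,1)$, $A^-(x,v,1)$ and $vx$ over the common denominator $2(1-v-2x+vx)$, the coefficient of $\sqrt{1-6vx+v^2x^2}$ simplifies to $vx(v+x-vx)$ while the purely rational part simplifies to $vx(2-3v-3x+3vx)+v^2x^2(v+x-vx)$; absorbing the summand $vx\cdot vx(v+x-vx)$ into the radical term then yields precisely the asserted expression.

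The only real obstacle is that this is a lengthy symbolic computation rather than a conceptual one: the numerators in Theorem~\ref{th1243x1342} are of fairly high degree, and one must be careful that the spurious factor $1-2v-x+vx$ genuinely divides the $A^+$ numerator at $w=1$ and that the $-vx$ arising from $1-w-vx(2-w)$ genuinely cancels in $A^-$, so that both contributions carry the single surd $\sqrt{1-6vx+v^2x^2}$ and can be merged. I would perform the simplification with a computer algebra system, just as the solutions in Theorem~\ref{th1243x1342} were verified. As consistency checks: putting $v=1$ and using $A(x,1,1)=\tfrac12(1-3x-\sqrt{1-6x+x^2})$ from~\eqref{sseq0}, the answer collapses to $\tfrac12(1-x-\sqrt{1-6x+x^2})=\sum_{n\geq1}S_nx^n$, matching~\eqref{introe2}; and the final formula is exactly~\eqref{introe1} with $y$ replaced by $v$, whence $|\mathcal{S}_{n,i}(1243,1342)|=S_{n,i}$, this pattern pair being the reversal of $(3421,2431)$ from the conjecture. (Naturally this also coincides with the generating function in Corollary~\ref{co1243x1423}.)
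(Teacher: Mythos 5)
Your proposal is correct and takes essentially the same route as the paper, which obtains Corollary~\ref{co1243x1342} precisely by substituting $w=1$ into Theorem~\ref{th1243x1342} and forming $vx+A^+(x,v,1)+A^-(x,v,1)$, leaving the (machine-assisted) simplification implicit. Your explicit tracking of the cancellations --- the spurious factor $1-2v-x+vx$ in the $A^+$ numerator and the degenerate factor $-vx$ in the $A^-$ denominator, whose radical coefficient indeed collapses to $-v(1-x)^2$ --- together with the $v=1$ check against \eqref{introe2} is sound.
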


\subsection{The case $(1243,1324)$.}

We first write a recurrence for $a_n(i,j)$ when $i<j$.

\begin{lemma}\label{1243,1324}
We have
\begin{equation}\label{i(i+1)rec}
a_n(i,i+1)=a_{n-1}(i,i+1)+\sum_{a=1}^{i-1}\sum_{c=0}^{i-a-1}\sum_{b=a+1}^{i-c}\binom{i-a-1}{c}a_{n-c-2}(a,b),\quad 1 \leq i \leq n-2,
\end{equation}
and
\begin{equation}\label{ijrec}
a_n(i,j)=a_{n-1}(i,j)+\sum_{a=1}^{i-1}\sum_{c=0}^{i-a-1}\binom{i-a-1}{c}a_{n-c-2}(a,j-c-1), \quad 3 \leq i+2\leq j \leq n-1,
\end{equation}
with $a_n(i,n)=a_{n-1}(i)$ for $1 \leq i \leq n-1$.
\end{lemma}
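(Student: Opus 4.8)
The plan is to argue just as in the proofs of Lemmas \ref{12431423L1} and \ref{12431342L1}: let $\mathcal{A}_{n,i,j}$ denote the subset of $\mathcal{S}_{n,i}(1243,1324)$ whose members have second letter $j$ (so $|\mathcal{A}_{n,i,j}|=a_n(i,j)$), fix $\pi=\pi_1\cdots\pi_n\in\mathcal{A}_{n,i,j}$ with $i<j$, and perform a case analysis on the third letter $k=\pi_3$. The boundary case $j=n$ is handled first and separately: since $\pi_2=n$ is the largest entry and the maximum of each of $1243$ and $1324$ sits in position $3$ resp.\ $4$, the entry $\pi_2$ cannot participate in any occurrence of either pattern, so deleting it is a bijection onto $\bigcup_{j'\ne i}\mathcal{A}_{n-1,i,j'}$ and yields $a_n(i,n)=a_{n-1}(i)$.

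For $j\le n-1$ I would first pin down the admissible values of $k$. If $k\ge j+2$, then the entry $j+1$ necessarily occurs after position $3$ and $i\,j\,k\,(j+1)$ is a $1243$; if $i<k<j$, then (since $j\le n-1$) some entry exceeding $j$ necessarily occurs after position $3$ and forms a $1324$ with $i\,j\,k$. Hence when $j=i+1$ the only possibilities are $k=i+2$ or $k<i$, and when $j\ge i+2$ they are $k=j+1$ or $k<i$. In the sub-case $k=i+2$ (resp.\ $k=j+1$) a short forbidden-pattern argument, carried out in both directions, shows that $\pi_3$ is irrelevant to avoidance and may be deleted; this accounts for the term $a_{n-1}(i,i+1)$ in \eqref{i(i+1)rec} and the term $a_{n-1}(i,j)$ in \eqref{ijrec}.

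The substantive sub-case is $k<i$. Here $\pi$ decomposes uniquely as $\pi=i\,j\,x_1\cdots x_c\,a\,d\,\rho$, where $x_1>\cdots>x_c$ is the maximal strictly decreasing run beginning at position $3$ (all entries $<i$), $a$ is its bottom, and $d>a$ is the first subsequent ascent top — the run must terminate because the suffix contains entries larger than $\pi_3=k$. One checks that $d$ cannot exceed $j+1$ (else $i\,j\,d\,(j+1)$ is a $1243$) and cannot lie in $(i,j)$ (else $i\,j\,d$ together with a later large entry is a $1324$). Deleting $i,j,x_1,\dots,x_c$ and reducing then leaves a member of $\mathcal{A}_{n-c-2,a,b}$ with $b$ the reduced value of $d$, and there are $\binom{i-a-1}{c}$ choices for the $x$'s. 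When $j=i+1$ the surviving values of $d$ are $i+2$ and $[a+1,i-1]\setminus\{x_1,\dots,x_c\}$, so $b$ ranges over $[a+1,i-c]$, giving \eqref{i(i+1)rec}. When $j\ge i+2$ the option $d\in[a+1,i-1]$ is additionally excluded: in that case $\rho$ is forced (using $j\le n-1$ and $j\ge i+2$) to contain both an entry $>j$ and an entry in $(i,j)$, while avoiding $1324$ forces every such large entry to precede every such medium entry, whereupon $a\,d$ followed by one such large/medium pair is a $1243$. Hence only $d=j+1$ survives, with reduced value $b=j-c-1$, which gives \eqref{ijrec}.

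I expect the main obstacle to be exactly this last point — correctly determining the admissible range of $d$ in the $k<i$ sub-case and recognizing why it collapses to the single value $j+1$ as soon as $j\ge i+2$, which is precisely what distinguishes \eqref{ijrec} from \eqref{i(i+1)rec} — together with the routine but necessary bookkeeping to confirm that each deletion above is a genuine bijection, i.e., that reinserting the removed letters never creates a forbidden pattern; this is done by reversing the forbidden-pattern arguments, exactly as in the proof of Lemma \ref{12431423L1}.
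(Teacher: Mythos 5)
Your proof is correct and follows essentially the same route as the paper's: the boundary case $j=n$ by deleting the second letter, a case analysis on the third letter $k$ (with $k=j+1$ deletable), and the decomposition $i\,j\,x_1\cdots x_c\,a\,d\,\rho$ at the second ascent when $k<i$, with the count $\binom{i-a-1}{c}$ for the $x$'s and the collapse to $d=j+1$ when $j\ge i+2$ giving \eqref{ijrec} versus the range $b\in[a+1,i-c]$ giving \eqref{i(i+1)rec}. The only (immaterial) difference is how $d<i$ is excluded for $j\ge i+2$: you observe that $1324$-avoidance forces entries $>j$ to precede entries in $(i,j)$ and then exhibit a $1243$ of the form $a\,d\,N\,x$, whereas the paper observes that $1243$-avoidance forces the entries of $[d+1,n]$ after $d$ to increase and then exhibits the $1324$ occurrence $i\,j\,x\,n$ --- dual arguments reaching the same conclusion.
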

\begin{proof}
A similar proof may be given as in the prior two cases.  Note that the formula for $a_n(i,n)$ is obvious since an $n$ in the second position may clearly be removed.  Let $k$ denote the third letter of a member of $\mathcal{S}_{n,i}(1243,1324)$. If $j=i+1$ where $1 \leq i \leq n-2$, then either $k=i+2$ or $k<i$, where clearly $k$ may be deleted in the former case.  Assuming the latter, let $a,d$ denote the second leftmost ascent.  Then we must have $a+1\leq d \leq i+2$, for otherwise a $1243$ would occur.  Thus, each letter prior to $a$ may be deleted in this case and considering all possible $a$, $b$ and $c$, where $b$ and $c$ are as before, implies formula \eqref{i(i+1)rec}.  If $i+2 \leq j <n$, then we have $k=j+1$ or $k<i$, the former leading to $a_{n-1}(i,j)$ possibilities.  On the other hand, if $k<i$ and $a,d$ denotes the second ascent, then we must have $d=j+1$.  To see this, note that elements of $[d+1,n]$ to the right of $d$ must form an increasing subsequence and thus $d<j<n$ would imply an occurrence of $1324$ of the form $ijxn$ where $x\in[i+1,j-1]$.  Therefore $d=j+1$ implies all letters prior to $a$ again may be deleted, resulting in a permutation that starts $a,j-c-1$.  Considering all possible $a$ and $c$ then accounts for the double sum expression in \eqref{ijrec} and completes the proof.
\end{proof}

From this, one may deduce the following further useful formula.

\begin{lemma}\label{L1ij}
If $1 \leq i \leq n-3$ and $i+2 \leq j \leq n-1$, then
\begin{equation}\label{L1ije1}
a_n(i,j)=a_{n-1}(1,j-1)+a_{n-1}(2,j-1)+\cdots+a_{n-1}(i-1,j-1)+a_{n-1}(i,j).
\end{equation}
\end{lemma}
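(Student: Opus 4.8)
The identity in Lemma~\ref{L1ij} should follow by induction on $i$, using the two recurrences of Lemma~\ref{1243,1324} as the base machinery. Fix $n$ and $j$ with $i+2 \le j \le n-1$, and argue by induction on $i \ge 1$. For the base case $i=1$, recurrence \eqref{ijrec} reads $a_n(1,j) = a_{n-1}(1,j)$ (the inner double sum is empty, since $a$ ranges over $[1,0]=\varnothing$), which is exactly \eqref{L1ije1} with $i=1$. So the task is to pass from $i-1$ to $i$.

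First I would write down \eqref{ijrec} at the index $i$ and subtract from it \eqref{ijrec} at the index $i-1$, to obtain a formula for the increment $a_n(i,j) - a_n(i-1,j)$ in terms of $a_{n-1}(i,j) - a_{n-1}(i-1,j)$ plus a difference of the binomial-weighted double sums $\sum_{a=1}^{i-1}\sum_{c=0}^{i-a-1}\binom{i-a-1}{c}a_{n-c-2}(a,j-c-1)$ and its analogue with $i$ replaced by $i-1$. The standard Pascal-type manipulation $\binom{i-a-1}{c} = \binom{i-a-2}{c} + \binom{i-a-2}{c-1}$ should collapse this difference of double sums into something cleaner: I expect the telescoping to leave a single sum of the shape $\sum_{a=1}^{i-1}\sum_{c}\binom{i-a-2}{c-1}a_{n-c-2}(a,j-c-1)$ (the $\binom{i-a-2}{c}$ pieces cancel against the shifted copy), and then a reindexing $c \mapsto c+1$ turns that into exactly the double sum appearing in \eqref{ijrec} at index $i$ but with $n$ decreased by one and $j$ decreased by one — i.e. it matches the corresponding term of $a_{n-1}(i-1, j-1)$ via \eqref{ijrec} applied with parameters $(n-1, i-1, j-1)$. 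Combined with the already-present $a_{n-1}(i,j) - a_{n-1}(i-1,j)$ term, this should yield
$$a_n(i,j) - a_n(i-1,j) = a_{n-1}(i,j) - a_{n-1}(i-1,j) + a_{n-1}(i-1,j-1),$$
after which adding the induction hypothesis for $a_n(i-1,j)$ (which gives the telescoped sum $\sum_{\ell=1}^{i-2}a_{n-1}(\ell,j-1) + a_{n-1}(i-1,j)$) produces \eqref{L1ije1} at index $i$.

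The main obstacle I anticipate is purely bookkeeping: making sure the ranges of the inner sums over $b$-free indices line up after the Pascal split and the reindexing, in particular handling the boundary terms $c=0$ and $c=i-a-1$ correctly and verifying that the case $j=i+2$ (where \eqref{ijrec} is the relevant recurrence, not \eqref{i(i+1)rec}) and the upper boundary $j=n-1$ versus $j=n$ do not create exceptional terms — recall $a_n(i,n)=a_{n-1}(i)$ obeys a different rule, so one must check that $j-1 \le n-1$ keeps us inside the regime where \eqref{ijrec} applies to the shifted index set, which it does since $j \le n-1$ forces $j-1 \le n-2 < n-1$. One should also double-check that \eqref{i(i+1)rec} is never needed here: since $j \ge i+2$, the shifted pair $(i-1, j-1)$ still satisfies $j-1 \ge i+1 > i-1$, hence $j-1 \ge (i-1)+2$ exactly when $j \ge i+2$, so \eqref{ijrec} indeed governs every quantity that appears. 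Once these index checks are dispatched, the algebra is routine and the induction closes.
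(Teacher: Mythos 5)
Your plan is correct and is essentially the paper's argument in telescoped form: the paper proves \eqref{L1ije1} directly by applying \eqref{ijrec} to each $a_{n-1}(\ell,j-1)$, interchanging summations, and invoking the hockey-stick identity $\sum_{\ell=a+c+1}^{i-1}\binom{\ell-a-1}{c}=\binom{i-a-1}{c+1}$, which is exactly the iterated form of the Pascal split you apply one step at a time in your induction on $i$. Your increment identity $a_n(i,j)-a_n(i-1,j)=a_{n-1}(i,j)-a_{n-1}(i-1,j)+a_{n-1}(i-1,j-1)$ does check out (the $a=i-1$ boundary term of the Pascal split contributes $a_{n-2}(i-1,j-1)$, which is precisely the leading term needed to reconstitute $a_{n-1}(i-1,j-1)$ via \eqref{ijrec} at parameters $(n-1,i-1,j-1)$), and all of your index-range verifications are correct.
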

\begin{proof}
Note that \eqref{L1ije1} is clearly true on combinatorial grounds if $i=1$ since the third letter of a member of $\mathcal{S}_{n}(1243,1324)$ starting with $1,j$ where $j<n$ must be $j+1$.  So let $i \geq 2$ and $j \in [i+2,n-1]$.  Then by \eqref{ijrec}, we have
\begin{equation}\label{L1ije2}
\sum_{\ell=1}^{i-1}a_{n-1}(\ell,j-1)=\sum_{\ell=1}^{i-1}\left(a_{n-2}(\ell,j-1)+\sum_{a=1}^{\ell-1}\sum_{c=0}^{\ell-a-1}\binom{\ell-a-1}{c}a_{n-c-3}(a,j-c-2)\right).
\end{equation}
If $i \geq 3$, then
\begin{align*}
&\sum_{\ell=2}^{i-1}\sum_{a=1}^{\ell-1}\sum_{c=0}^{\ell-a-1}\binom{\ell-a-1}{c}a_{n-c-3}(a,j-c-2)\\
&=\sum_{a=1}^{i-2}\sum_{c=0}^{i-a-2}a_{n-c-3}(a,j-c-2)\sum_{\ell=a+c+1}^{i-1}\binom{\ell-a-1}{c}\\
&=\sum_{a=1}^{i-2}\sum_{c=0}^{i-a-2}a_{n-c-3}(a,j-c-2)\binom{i-a-1}{c+1}\\
&=\sum_{c=1}^{i-2}\sum_{a=1}^{i-c-1}\binom{i-a-1}{c}a_{n-c-2}(a,j-c-1).
\end{align*}
Thus, the right-hand side of \eqref{L1ije2} is given by
\begin{align*}
&\sum_{\ell=1}^{i-1}a_{n-2}(\ell,j-1)+\sum_{c=1}^{i-2}\sum_{a=1}^{i-c-1}\binom{i-a-1}{c}a_{n-c-2}(a,j-c-1)\\
&=\sum_{c=0}^{i-2}\sum_{a=1}^{i-c-1}\binom{i-a-1}{c}a_{n-c-2}(a,j-c-1)\\
&=\sum_{a=1}^{i-1}\sum_{c=0}^{i-a-1}\binom{i-a-1}{c}a_{n-c-2}(a,j-c-1)\\
&=a_n(i,j)-a_{n-1}(i,j),
\end{align*}
again by \eqref{ijrec}, which completes the proof.
\end{proof}

Summarizing, we have the following recurrence relations satisfied by $a_n(i,j)$:
\begin{align}\label{eqms1x1}
\left\{\begin{array}{ll}
a_n(i,j)&=a_{n-1}(j),\quad 1\leq j<i\leq n,\\[4pt]
a_n(i,n)&=a_{n-1}(i), \quad 1 \leq i \leq n-1,\\
a_n(i,j)&=\sum_{\ell=1}^{i-1}a_{n-1}(\ell,j-1)+a_{n-1}(i,j),\\
&\qquad\qquad\qquad\qquad\qquad\qquad\quad1 \leq i \leq n-3\mbox{ and }i+2 \leq j \leq n-1,\\[4pt]
a_n(i,i+1)&=a_{n-1}(i,i+1)+\sum_{a=1}^{i-1}\sum_{b=a+1}^i\sum_{c=0}^{i-b}\binom{i-a-1}{c}a_{n-c-2}(a,b),\\
&\qquad\qquad\qquad\qquad\qquad\qquad\quad 1 \leq i \leq n-2.
\end{array}
\right.
\end{align}

Define $A_n(v,w)$, $A_n^{\pm}(v,w)$ and $C_n(v)$ as in the previous subsection, but with $B_n(v,w)$ now given by $B_n(v,w)=\sum_{i=1}^{n-3}\sum_{j=i+2}^{n-1}a_n(i,j)v^iw^j$.
Translating \eqref{eqms1x1} then yields the following recurrences:
\begin{align*}
A_n^-(v,w)&=\sum_{i=2}^{n}\sum_{j=1}^{i-1}a_{n-1}(j)v^iw^j=\frac{v}{1-v}A_{n-1}(vw,1)-\frac{v^{n+1}}{1-v}A_{n-1}(w,1),\\
A_n^+(v,w)&=\sum_{i=1}^{n-2}\sum_{j=i+1}^{n-1}a_n(i,j)v^iw^j+\sum_{i=1}^{n-1}a_n(i,j)v^iw^n\\
&=\sum_{i=1}^{n-3}\sum_{j=i+2}^{n-1}a_n(i,j)v^iw^j+w\sum_{i=1}^{n-2}a_n(i,i+1)(vw)^i+w^nA_{n-1}(v,1)\\
&=B_n(v,w)+wC_n(vw)+w^nA_{n-1}(v,1),
\end{align*}
with
\begin{align*}
B_n(v,w)&=\sum_{i=2}^{n-3}\sum_{j=i+2}^{n-1}\sum_{k=1}^{i-1}a_{n-1}(i,j-1)v^iw^j
+\sum_{i=1}^{n-3}\sum_{j=i+2}^{n-1}a_{n-1}(i,j)v^iw^j\\
&=w\sum_{k=1}^{n-4}\sum_{j=k+3}^{n-1}\sum_{i=k+1}^{j-2}a_{n-1}(i,j-1)v^iw^j
+\sum_{i=1}^{n-4}\sum_{j=i+2}^{n-2}a_{n-1}(i,j)v^iw^j+\sum_{i=1}^{n-3}a_{n-2}(i)v^iw^{n-1}\\
&=\frac{wv}{1-v}B_{n-1}(v,w)-\frac{w}{1-v}B_{n-1}(1,vw)+B_{n-1}(v,w)+w^{n-1}A_{n-2}(v,1)\\
&-w^{n-1}v^{n-2}A_{n-3}(1,1)
\end{align*}
and
\begin{align*}
C_n(v)&=C_{n-1}(v)+v^{n-2}A_{n-3}(1,1)+\sum_{i=1}^{n-2}\sum_{a=1}^{i-1}\sum_{b=a+1}^i\sum_{c=0}^{i-b}\binom{i-a-1}{c}a_{n-2-c}(a,b)v^i\\
&=C_{n-1}(v)+v^{n-2}A_{n-3}(1,1)+\sum_{a=1}^{n-3}\sum_{b=a+1}^{n-2}\sum_{c=0}^{n-2-b}\sum_{i=b+c}^{n-2}\binom{i-a-1}{c}a_{n-2-c}(a,b)v^i.
\end{align*}

Define $A(x,v,w)$, $A^{\pm}(x,v,w)$ and $C(x,v)$ as in the previous subsection, with $B(x,v,w)=\sum_{n\geq2}B_n(v,w)x^n$ per the new definition for $B_n(v,w)$.  From the preceding recurrences, we obtain the following system of functional equations.

\begin{proposition}\label{pr1}
We have
\begin{align*}
A^-(x,v,w)&=v^2wx^2+\frac{vx}{1-v}A(x,vw,1)-\frac{v^2x}{1-v}A(vx,w,1),\\
A^+(x,v,w)&=vw^2x^2+B(x,v,w)+wC(x,vw)+wxA(wx,v,1),
\end{align*}
where
\begin{align*}
B(x,v,w)&=\frac{wx}{1-v}(vB(x,v,w)-B(x,1,vw))+xB(x,v,w)+wx^2A(wx,v,1)\\
&-vw^2x^3A(vwx,1,1)-v^2w^3x^4,\\
C(x,v)&=\frac{vx^3(1+vx)}{1-x}+\frac{vx^3}{1-x}A(vx,1,1)\\
&+\frac{x^2}{(vx+v-1)(1-x)}(vA^+(\frac{vx}{1-vx},1-vx,1)
-(1-vx)A^+(x,1-vx,\frac{v}{1-vx})).
\end{align*}
\end{proposition}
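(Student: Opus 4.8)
The plan is to obtain each of the five displayed identities by multiplying the corresponding $n$-indexed recurrence established just before the statement (for $A_n^-(v,w)$, $A_n^+(v,w)$, $B_n(v,w)$ and $C_n(v)$, which in turn follow by translating \eqref{eqms1x1}) by $x^n$, summing over $n$, and re-indexing so as to recognize the series $A$, $A^+$, $B$ and $C$ at suitably rescaled arguments. For the equations for $A^-$ and $A^+$ this is essentially immediate: the only manipulations are index shifts such as $\sum_{n}\frac{v}{1-v}A_{n-1}(vw,1)x^n=\frac{vx}{1-v}A(x,vw,1)$, $\sum_{n}\frac{v^{n+1}}{1-v}A_{n-1}(w,1)x^n=\frac{v^2x}{1-v}A(vx,w,1)$, $\sum_n w^nA_{n-1}(v,1)x^n=wx\,A(wx,v,1)$ and $\sum_n wC_n(vw)x^n=wC(x,vw)$, together with the observation that the isolated terms $v^2wx^2$ and $vw^2x^2$ are the $n=2$ contributions $A_2^-(v,w)=v^2w$ and $A_2^+(v,w)=vw^2$, which are fixed by the initial values $a_2(2,1)=a_2(1,2)=1$ rather than by the recurrences. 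Throughout, one must keep track of the small values of $n$ at which the $n$-indexed recurrences are not yet valid and supply the missing low-order terms from the initial conditions $a_1(1)=a_2(1,2)=a_2(2,1)=a_3(i,j)=1$.

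The recurrence for $B_n$ as stated already contains no nested binomial sums, so its translation requires only the same shifts: $\sum_n\frac{wv}{1-v}B_{n-1}(v,w)x^n=\frac{wvx}{1-v}B(x,v,w)$, $\sum_n\frac{w}{1-v}B_{n-1}(1,vw)x^n=\frac{wx}{1-v}B(x,1,vw)$, $\sum_n B_{n-1}(v,w)x^n=xB(x,v,w)$, $\sum_n w^{n-1}A_{n-2}(v,1)x^n=wx^2A(wx,v,1)$ and $\sum_n w^{n-1}v^{n-2}A_{n-3}(1,1)x^n=vw^2x^3A(vwx,1,1)$. Collecting these, and absorbing the low-order discrepancy into the residual $-v^2w^3x^4$, reproduces the claimed equation for $B(x,v,w)$.

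The substantive work is the equation for $C(x,v)$, since the $C_n$ recurrence carries the quadruple sum $\sum_{a=1}^{n-3}\sum_{b=a+1}^{n-2}\sum_{c=0}^{n-2-b}\sum_{i=b+c}^{n-2}\binom{i-a-1}{c}a_{n-2-c}(a,b)v^i$. I would set $m=n-2-c$ (the argument of $a_m(a,b)$), interchange the order of summation so that the pair $(a,b)$ with $1\le a<b\le m$ is outermost, reindex the innermost sum by $i'=i-c$, and then apply the generating-function identity $\sum_{c\ge0}\binom{i'-a-1+c}{c}(vx)^c=(1-vx)^{-(i'-a)}$ followed by the finite geometric sum $\sum_{i'=b}^{m}t^{i'}=\frac{t^{b}-t^{m+1}}{1-t}$ with $t=\frac{v}{1-vx}$. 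The $t^{b}$ part reassembles into $x^2A^+\bigl(x,1-vx,\tfrac{v}{1-vx}\bigr)$ and the $t^{m+1}$ part into $\tfrac{vx^2}{1-vx}A^+\bigl(\tfrac{vx}{1-vx},1-vx,1\bigr)$; multiplying through by $\tfrac{1}{1-t}=\tfrac{1-vx}{1-v-vx}$ and using $1-v-vx=-(vx+v-1)$ turns this into exactly the $A^+$-contribution in the claimed formula. Translating the two remaining terms of the $C_n$ recurrence, namely $\sum_n C_{n-1}(v)x^n=xC(x,v)$ and $\sum_n v^{n-2}A_{n-3}(1,1)x^n=vx^3A(vx,1,1)$, adding the low-order correction $\tfrac{vx^3(1+vx)}{1-x}$ coming from the small values of $n$, collecting, and dividing by $1-x$ yields the stated identity for $C(x,v)$.

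The main obstacle is precisely this re-indexing of the quadruple sum: one must let $(a,b)$ range freely over $1\le a<b\le m$ while the auxiliary indices $c$ and $i$ are summed against the explicit binomial and geometric kernels, and keep the three transformed argument triples $\bigl(\tfrac{vx}{1-vx},1-vx,1\bigr)$, $\bigl(x,1-vx,\tfrac{v}{1-vx}\bigr)$ and $(vx,1,1)$ straight, as well as not dropping the $n$-values where the original recurrences fail. Once the bookkeeping is organized the remaining algebra is routine and, as is done elsewhere in the paper, can be verified symbolically.
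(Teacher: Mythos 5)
Your proposal is correct and follows essentially the same route as the paper: the paper derives the $n$-indexed recurrences for $A_n^{\pm}$, $B_n$ and $C_n$ from \eqref{eqms1x1} and then obtains the proposition by multiplying by $x^n$ and summing, which is exactly what you do, including the key step of resolving the quadruple sum in $C_n$ via the negative-binomial and geometric kernels to produce the two $A^+$ terms at the arguments $\bigl(\tfrac{vx}{1-vx},1-vx,1\bigr)$ and $\bigl(x,1-vx,\tfrac{v}{1-vx}\bigr)$. Your identification of the low-order corrections (e.g.\ the $-v^2w^3x^4$ arising from the $n=4$ discrepancy in the $B_n$ recurrence) is also consistent with the stated formulas.
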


By programming, one may verify that the solution of the foregoing system is given as follows.

\begin{theorem}
We have $A(x,v,w)=A^+(x,v,w)+A^-(x,v,w)$, where
\begin{align*}
&A^+(x,v,w)\\
&=\frac{(1-v+(v^2x-2v^2+vx-x)w-vx(x-2)(v-1)w^2)vw^2x^2}{2(1-v+wx(v-2))(1-2vw-x(1-vw))}\sqrt{v^2w^2x^2-6vwx+1}\\
&+\frac{(1-v+(6v^2-4v-x(4v^2-2v+1))w+xv(x(v^2+4v-4)-2v^2-6v+6)w^2)vw^2x^2}{2(1-v+wx(v-2))(1-2vw-x(1-vw))}\\
&-\frac{(x-2)(v-1)v^3w^5x^4}{2(1-v+wx(v-2))(1-2vw-x(1-vw))}\\
&=vw^2x^2+w^2v(vw+w+1)x^3+w^2v(2v^2w^2+2vw^2+2vw+2w^2+w+1)x^4+\cdots
\end{align*}
and
\begin{align*}
&A^-(x,v,w)\\
&=\frac{(x(1+v-2vx)+x(v^2x-v^2+vx-1)w-v(x-1)(vx-1)w^2)v^2wx^2}{2(1-vw-x(2-vw))(1-w-vx(2-w))}\sqrt{v^2w^2x^2-6vwx+1}\\
&+\frac{((3x-2)(w-1)+(-w^2x^2-3w^2x-2wx^2+3w^2+2wx+6x^2-2w-3x)v)v^2wx^2}{2(1-vw-x(2-vw))(1-w-vx(2-w))}\\
&+\frac{(w^2x+wx^2-w^2+3wx-2x^2-3w-2x+3-(x-1)(w-1)vwx)v^4w^2x^3}{2(1-vw-x(2-vw))(1-w-vx(2-w))}\\
&=v^2wx^2+(vw+v+1)v^2wx^3+2(v^2w^2+v^2w+v^2+vw+v+1)v^2wx^4+\cdots.
\end{align*}
Moreover,
\begin{align*}
&B(x,v,w)\\
&=-\frac{vw^2x^2(wx^2-2wx-x+1)}{2(vwx-2vw-x+1)(vwx-2wx-v+1)}\sqrt{v^2w^2x^2-6vwx+1}\\
&-\frac{vw^2x^2(vw^2x^3-2vw^2x^2-vwx^2+3vwx-3wx^2+2wx+x-1)}{2(vwx-2vw-x+1)(vwx-2wx-v+1)}\\
&=vw^3x^4+w^3v(3vw+2w+1)x^5+w^3v(11v^2w^2+8vw^2+4vw+4w^2+2w+1)x^6+\cdots
\end{align*}
and
\begin{align*}
C(x,v)=\frac{-vx^2(vx^2-2vx-3x+2+(x-2)\sqrt{v^2x^2-6vx+1})}{2(1-x-2v+vx)}.
\end{align*}
\end{theorem}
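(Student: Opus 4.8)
The statement asserts that the displayed quadruple $(A^+,A^-,B,C)$ solves the system of functional equations in Proposition~\ref{pr1}. Since the recurrences \eqref{eqms1x1} determine every $a_n(i,j)$ from entries with strictly smaller first index, that system has a unique formal power series solution, so the whole task reduces to (a) producing the candidate and (b) checking it satisfies each of the four equations identically. I would obtain the candidate by the same kernel-method bootstrap already used in the subsection on $(1243,1342)$: treat the conjectured first-letter generating function as an ansatz, solve, and verify its consistency at the end.

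Step one: eliminate the auxiliary series. From the $A^-$-equation of Proposition~\ref{pr1}, $A^-(x,v,w)$ is \emph{already} an explicit expression in $A(\cdot,\cdot,1)$, so once a formula for $A(x,v,1)$ is posited everything with third argument $1$ — in particular $A^+(x,v,1)=A(x,v,1)-A^-(x,v,1)$ — is known. Rearranging the $B$-equation gives
\[
\left(1-x-\frac{vwx}{1-v}\right)B(x,v,w)=-\frac{wx}{1-v}\,B(x,1,vw)+wx^2A(wx,v,1)-vw^2x^3A(vwx,1,1)-v^2w^3x^4,
\]
whose left-hand coefficient vanishes for $v=\frac{1-x}{1-x+wx}$; that kernel substitution fixes $B(x,1,\cdot)$ and hence, back in this identity, $B(x,v,w)$ itself, purely in terms of $A(\cdot,\cdot,1)$ and $A(\cdot,1,1)$. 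Finally the $C$-equation writes $C(x,v)$ through $A(vx,1,1)$, through $A^+\!\left(\frac{vx}{1-vx},1-vx,1\right)$ (third argument $1$, hence known), and through the single genuinely unknown dilated term $A^+\!\left(x,1-vx,\frac{v}{1-vx}\right)$. Substituting $B$ and $C$ into the $A^+$-equation therefore collapses the system to one functional equation of the form
\[
\lambda(x,v,w)\,A^+(x,v,w)=\mu(x,v,w)\,A^+\!\left(x,\,1-vwx,\,\tfrac{vw}{1-vwx}\right)+\Phi(x,v,w),
\]
with $\lambda,\mu$ rational and $\Phi$ built from $A(\cdot,\cdot,1)$ and $A(\cdot,1,1)$ — the analogue of \eqref{sseq2}.

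Step two: supply the inputs and run the kernel method. Take $A(x,1,1)$ to be the Schr\"oder generating function \eqref{sseq0}, and \emph{assume} $A(x,v,1)$ equals the expression of Corollary~\ref{co1243x1342}; this is exactly what the conjecture predicts for $(1243,1324)$ and is what will be recovered below, closing the loop. With $\Phi$ and $A^+(x,v,1)$ now explicit, observe that the map $(v,w)\mapsto\bigl(1-vwx,\tfrac{vw}{1-vwx}\bigr)$ preserves the product $vw$ and retracts onto the curve $v(1+wx)=1$, on which it is the identity. Substituting the value of $v$ that places $(v,w)$ on that curve makes the two occurrences of $A^+$ in the displayed functional equation coincide, so $A^+$ is determined there; plugging that back into the functional equation — whose right-hand side then only ever evaluates $A^+$ at a point of the curve — yields the claimed closed form for $A^+(x,v,w)$, and reading $C$, $B$ and $A^-$ off the equations of Proposition~\ref{pr1} gives the remaining three displayed formulas.

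Step three: verify, and note the obstacle. One checks, by symbolic computation as the paper indicates, that $(A^+,A^-,B,C)$ so produced satisfies all four equations of Proposition~\ref{pr1} identically; by the uniqueness noted at the outset this proves the theorem. Setting $w=1$ in $A^+(x,v,w)+A^-(x,v,w)$ and adding $vx$ returns the generating function of Corollary~\ref{co1243x1342}, which vindicates the assumption made in step two, and the further specialization $v=1$ recovers \eqref{introe1}, i.e.\ $|\mathcal{S}_{n,i}(1243,1324)|=S_{n,i}$. The genuine difficulty is precisely this self-referential structure: because Proposition~\ref{pr1} ties $A^+$ (and $B$) to values at the dilated arguments $\tfrac{vx}{1-vx}$, $1-vx$, $\tfrac{v}{1-vx}$, the system cannot be unwound from the bottom up, and one is forced to inject the conjectured $w=1$ answer, solve, and then confirm consistency; carrying the attendant radical-laden rational expressions through the kernel substitutions without slip is what makes the final check a computer-algebra task. (One could alternatively hunt for a bijection between $\mathcal{S}_n(1243,1342)$ and $\mathcal{S}_n(1243,1324)$ preserving the first two letters, since the displayed $A^+,A^-$ coincide with those of Theorem~\ref{th1243x1342}, but the generating-function route above is the one already licensed by the machinery in place.)
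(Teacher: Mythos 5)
Your proposal is correct and takes essentially the same route as the paper: the paper's proof of this theorem consists precisely of setting up the functional-equation system in the proposition immediately preceding it and then verifying by computer algebra that the displayed closed forms satisfy that system, uniqueness of the formal power series solution being implicit. Your added remarks on uniqueness and on deriving the candidate by the kernel-method bootstrap (injecting the conjectured $w=1$ answer and exploiting the idempotent substitution $(v,w)\mapsto\bigl(1-vwx,\tfrac{vw}{1-vwx}\bigr)$) simply make explicit what the paper carries out in detail only for the $(1243,1423)$ case.
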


Taking $w=1$ in the prior theorem yields the following result.

\begin{corollary}\label{co1243x1324}
The generating function for the distribution of the first letter statistic on $\mathcal{S}_n(1243,1324)$ for $n \geq 1$ is given by
$$\frac{vx(2-3v-3x+3vx)+vx(v+x-vx)(vx+\sqrt{1-6vx+v^2x^2})}{2(1-v-2x+vx)}.$$
\end{corollary}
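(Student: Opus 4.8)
The plan is to obtain the corollary as a one-variable specialization of the theorem immediately preceding it. Writing $A(x,v,w)=\sum_{n\geq2}A_n(v,w)x^n$ with $A_n(v,w)=\sum_{i,j}a_n(i,j)v^iw^j$, the substitution $w=1$ gives $A_n(v,1)=\sum_{i=1}^n\bigl(\sum_{j\neq i}a_n(i,j)\bigr)v^i=\sum_{i=1}^n|\mathcal{S}_{n,i}(1243,1324)|v^i$ for each $n\geq2$. Hence the generating function for the first letter statistic over all $n\geq1$ equals $vx+A(x,v,1)=vx+A^+(x,v,1)+A^-(x,v,1)$, the isolated term $vx$ accounting for $\mathcal{S}_1(1243,1324)=\{1\}$. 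So it remains to set $w=1$ in the two displayed formulas of the theorem, add $vx$, and simplify to the asserted closed form.

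Carrying out $w=1$, the radical $\sqrt{v^2w^2x^2-6vwx+1}$ becomes $\sqrt{1-6vx+v^2x^2}$, the denominator of $A^+(x,v,w)$ becomes $2\bigl(1-v+x(v-2)\bigr)\bigl(1-2v-x(1-v)\bigr)=2(1-v-2x+vx)(1-2v-x+vx)$, and that of $A^-(x,v,w)$ becomes $2\bigl(1-v-x(2-v)\bigr)(-vx)=-2vx(1-v-2x+vx)$. The key intermediate step is to verify the cancellations collapsing both of these to the single denominator $2(1-v-2x+vx)$ that appears in the target formula: every monomial in the numerator of $A^-(x,v,1)$ already carries a factor $v^2x^2$, and the numerator of $A^+(x,v,1)$ is divisible by the extraneous factor $(1-2v-x+vx)$ (consistent with the final answer having $1-v-2x+vx$ as its only denominator factor). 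After these cancellations, $A^+(x,v,1)$ and $A^-(x,v,1)$ are each of the form $\frac{P(x,v)+Q(x,v)\sqrt{1-6vx+v^2x^2}}{2(1-v-2x+vx)}$ for suitable polynomials $P,Q$.

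It then remains to add $A^+(x,v,1)$, $A^-(x,v,1)$ and $vx$ and to separate the rational and the $\sqrt{1-6vx+v^2x^2}$ parts. I expect the radical coefficients to sum to $\frac{vx(v+x-vx)}{2(1-v-2x+vx)}$ and the rational contributions, together with $vx$, to collapse to $\frac{vx(2-3v-3x+3vx)+v^2x^2(v+x-vx)}{2(1-v-2x+vx)}$, so that the total is precisely $\frac{vx(2-3v-3x+3vx)+vx(v+x-vx)\bigl(vx+\sqrt{1-6vx+v^2x^2}\bigr)}{2(1-v-2x+vx)}$. The main obstacle is exactly this final polynomial bookkeeping, namely verifying the two divisibilities above and that the numerators recombine as stated; the cleanest way is to clear denominators and compare polynomial identities (or, as done elsewhere in the paper, to confirm the simplification by computer algebra). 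Two checks accompany the computation: setting $v=1$ must reduce the answer to $\tfrac12\bigl(1-x-\sqrt{1-6x+x^2}\bigr)$, the ordinary Schr\"{o}der generating function \eqref{introe2}, and the result must agree with \eqref{introe1} under $y\mapsto v$, so that coefficient extraction yields $|\mathcal{S}_{n,i}(1243,1324)|=S_{n,i}$ and settles the conjecture in this case.
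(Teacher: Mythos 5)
Your proposal matches the paper's own derivation: the paper obtains this corollary simply by setting $w=1$ in the preceding theorem (adding the $vx$ term for $n=1$, exactly as done for Corollary~\ref{co1243x1342}), and your claimed cancellations --- the factor $v^2x^2$ in the numerator of $A^-(x,v,1)$ and the factor $(1-2v-x+vx)$ in the numerator of $A^+(x,v,1)$, consistent with \eqref{sseq1} --- are correct. The extra algebraic detail and the consistency checks against \eqref{introe1} and \eqref{introe2} are sound but do not change the route.
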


\section{Conclusion}

In this paper, we have shown that each of the nine classes in Conjecture 1 above has last letter statistic distribution given by the array $S_{n,k}$, making use of a variety of techniques.  In most of these cases, a more general result is established from which the desired equivalence follows as a specific case.  We have also demonstrated by numerical evidence that there are no other avoidance classes for which the conjecture applies.  We remark that concerning the final six pattern pairs in Conjecture 1, we have shown further that the ascents and last letter statistics has the same joint distribution on the avoidance class in question as does the dist and last letter (plus one) statistics on the class $I_n(\geq,-,>)$ of inversion sequences, where dist records the number of distinct positive letters in an inversion sequence.  This confirms a further conjecture of Lin and Kim (see \cite[Conjecture~3.3]{LK}).  In these cases, we count equivalently, by the number of descents, members of $\mathcal{S}_{n,i}(\sigma,\tau)$, where $\sigma,\tau$ denotes (the reversal of) one of the last six pattern pairs in Conjecture 1.  The arguments used to establish these further $q$-results, where $q$ marks the number of descents, are however quite technical and differ when $q=1$ from the arguments presented here in the respective cases.  Details in the proof of these further results will be given in a forthcoming paper.

\end{document}